\providecommand{\customgenericname}{}
\newcommand{\newcustomtheorem}[2]{%
  \newenvironment{#1}[1]
  {%
   \renewcommand\customgenericname{#2}%
   \renewcommand\theinnercustomgeneric{##1}%
   \innercustomgeneric
  }
  {\endinnercustomgeneric}
}
\theoremstyle{plain}%
  \newtheorem{theorem}{Theorem}[section]
  \newtheorem{corollary}[theorem]{Corollary}
  \newtheorem{proposition}[theorem]{Proposition}
  \newtheorem{lemma}[theorem]{Lemma}
  \newtheorem{example}[theorem]{Example}
  \newtheorem{definition}[theorem]{Definition}
\newtheorem{remark}[theorem]{Remark}
\newfont{\hueca}{msbm10}
\def\hu #1{\hbox{\hueca #1}}\def\hu #1{\hbox{\hueca #1}}
\begin{document}

\title{Graded Lie-Rinehart algebras}

\thanks{The first author was  supported by the Centre for Mathematics of the University of Coimbra - UIDB/00324/2020, funded by the Portuguese Government through FCT/MCTES. Second and fourth authors are supported by the PCI of the UCA `Teor\'\i a de Lie y Teor\'\i a de Espacios de Banach' and by the PAI with project number FQM298. The second author also is supported by the 2014-2020 ERDF Operational Programme and by the Department of Economy, Knowledge, Business and University of the Regional Government of Andalusia FEDER-UCA18-107643. The third author was supported by Agencia Estatal de Investigaci\'on (Spain), grant MTM2016-79661-P (European FEDER support included, UE)}

\author[E. Barreiro]{Elisabete Barreiro}
\address{Elisabete~Barreiro,     University of Coimbra, CMUC, Department of Mathematics,  Apartado 3008,
3001-454 Coimbra, Portugal,   {\em E-mail}: {\tt mefb@mat.uc.pt}}{}

\author[A.J. Calder\'on]{A.J.~Calder\'on}
\address{Antonio J. Calder\'on,
Departamento de Matem\'aticas, Universidad de C\'adiz, Puerto Real, Espa\~na\\ {\em E-mail}: {\tt ajesus.calderon@uca.es}}{}

\author[Rosa M. Navarro]{Rosa M. Navarro}
\address{Rosa M. Navarro,
Departamento de Matem{\'a}ticas, Universidad de Extremadura, C{\'a}ceres, Espa\~na\\ {\em E-mail}: {\tt rnavarro@unex.es}}{}

\author[Jos\'{e} M. S\'{a}nchez]{Jos\'{e} M. S\'{a}nchez}
\address{Jos\'{e} M. S\'{a}nchez,
Departamento de Matem\'aticas, Universidad de C\'adiz, Puerto Real, Espa\~na\\ {\em E-mail}: {\tt txema.sanchez@uca.es}}{}

\begin{abstract}
We introduce the class of graded Lie-Rinehart algebras as a natural generalization of the one of graded Lie algebras. For $G$ an abelian group, we show that if $L$ is a tight $G$-graded Lie-Rinehart algebra over an associative and commutative $G$-graded algebra $A$ then $L$ and $A$ decompose as the orthogonal direct sums $L = \bigoplus_{i \in I}I_i$ and $A = \bigoplus_{j \in J}A_j$, where any $I_i$ is a non-zero ideal of $L$, any $A_j$ is a non-zero ideal of $A$, and both decompositions satisfy that for any $i \in I$ there exists a unique $j \in J$ such that $A_jI_i \neq 0$. Furthermore, any $I_i$ is a graded Lie-Rinehart algebra over $A_j$. Also, under mild conditions, it is shown that the above decompositions of $L$ and $A$ are by means of the family of their, respective, gr-simple ideals.

\medskip

{\it Keywords}: Lie-Rinehart algebra, graded algebra, simple component, structure theory.

{\it 2010 MSC}: 17A60, 17B60, 17B70.
\end{abstract}

\maketitle

\section{Introduction and first definitions}

Lie-Rinehart algebras were introduced by Herz in \cite{Herz}, being their theory mainly develo\-ped in \cite{Palais, Rinehart}. This notion can be regarded as a Lie $\mathbb F$-algebra which is simultaneosly an $A$-module, where $A$ is an associative and commutative $\mathbb F$-algebra, such that both structures are well-related. Huebschmann viewed this class of algebras as an algebraic counterpart of Lie algebroids defined over smooth manifolds, and his work has been develo\-ped through a series of articles (see \cite{Huebschmann, Huebschmann2, Huebschmann3}). In the last years, Lie-Rinehart algebras have been considered in many areas of Mathematics, parti\-culary from a geometric viewpoint (see for instance \cite{Mackenzie}) and of course from an algebraic viewpoint \cite{Recent1, Recent2, Recent3}. 

In the present paper,  for $G$ an abelian grading group, we introduce the class of graded Lie-Rinehart algebras $(L,A)$ being $A$ an associative $G$-graded algebra. We study the structure of the aforementioned algebras which can be considered to be a natural extension of graded Lie algebras. Let us note that our techniques consist mainly  of  introducing notions of connections over the elements of $G$. Moreover, we will need to impose some restrictions, the first of these is the reasonably natural notion of ``tightness'' introduced along Section 4. Remark that for tight graded Lie-Rinehart algebras, we establish a decomposition of the pair $(L,A)$ into ideals labeled by certain equivalence classes.

Our paper is organized as follows. In Section 2 we develop connection techniques in the framework of Lie-Rinehart algebras $(L,A)$ and apply, as a first step, all of these techniques to the study of the inner structure of $L$. In Section 3 we get, as a second step, a decomposition of $A$ as direct sum of adequate ideals. In Section 4, we relate the results obtained in Section 2 and 3 on $L$ and $A$ to prove our above mentioned  main results. The final section is devoted to show that, under mild conditions, the given decompositions of $L$ and $A$ are by means of the family of their, corresponding, simple ideals.

\medskip

Firstly we recall the definition of  Lie-Rinehart algebra.
Let ${\mathbb F}$ be an arbitrary base field and $A$ a commutative and associative ${\mathbb F}$-algebra. A {\it derivation} on $A$ is a ${\mathbb F}$-linear map $D : A \to A$ which satisfies
\begin{equation}
D(ab) = D(a)b + aD(b) \hspace{0.2cm} \mbox{\it (Leibniz's law)}
\end{equation}
for all $a,b \in A$.
The set $\mbox{Der}(A)$ of all derivations of $A$ is a Lie ${\mathbb F}$-algebra with Lie bracket $[D,D'] = DD' - D'D$ and an $A$-module simultaneously. These two structures are related by the following identity $$[D,aD'] = a[D,D'] + D(a)D', \hspace{0.1cm} \mbox{for all} \hspace{0.1cm} a\in A \hspace{0.1cm} \mbox{and} \hspace{0.1cm} D,D' \in  \mbox{Der}(A).$$

\begin{definition}\rm
A {\it Lie-Rinehart algebra} over an associative and commutative ${\mathbb F}$-algebra $A$ (with product denoted by juxtaposition) is a Lie ${\mathbb F}$-algebra $L$ (with product $[\cdot,\cdot]$) endowed with an $A$-module structure and with a map (called {\it anchor}) $$\rho : L \to \mbox{Der}(A),$$ which is simultaneously an $A$-module and a Lie algebra homomorphism, and  such that the following relation holds
\begin{equation}\label{fundamental}
[v,aw] = a[v,w] + \rho(v)(a)w,
\end{equation}
for any $v,w \in L$ and $a \in A.$ We denote it by $(L,A)$ or just by $L$ if there is no possible confusion.
\end{definition}

\begin{example}\label{exa1}\rm
Any Lie algebra $L$ is a Lie-Rinehart algebra over $A := \mathbb{F}$ as a consequence of $\mbox{Der}({\mathbb F}) = 0$.
\end{example}

\begin{example}\label{exa2}\rm
Any associative and commutative $\mathbb F$-algebra $A$ gives rise to a Lie-Rinehart algebra by taking $L := \mbox{Der}(A)$ and $\rho := Id_{\mbox{Der}(A)}$.
\end{example}

A {\it subalgebra} $(S,A)$ of $(L,A),$ $S$ for short, is a Lie subalgebra of $L$ such that  $AS \subset S$ and satisfying that $S$ acts on $A$ via the composition $$ S\hookrightarrow L \overset{\rho} \to {\mbox{Der}(A)}.$$

\noindent A subalgebra $(I,A)$ of $(L,A)$, $I$ for short, is called an {\it ideal} if $I$ is a Lie ideal of $L$ and satisfies
\begin{equation}\label{cond_ideal} \rho(I)(A)L \subset I.
\end{equation}
As example of an ideal we have ${\rm Ker}\rho$, the kernel of $\rho$. We say that a Lie-Rinehart algebra $(L,A)$ is {\it simple} if $[L,L] \neq 0$, $AA \neq 0$, $AL \neq 0$ and its only ideals are $\{0\}$, $L$ and ${\rm Ker}\rho$.

Let us introduce the class of graded algebras in the framework of Lie-Rinehart algebras. We begin by recalling the definition of a graded algebra.

\begin{definition}\rm
Let $G$ be an abelian  multiplicative group with neutral element $1$. An algebra $A$ over an arbitrary base field $\mathbb{F}$ is {\em $G$-graded} or just {\em graded} if $A = \oplus_{g \in G}A_g$ satisfying $A_g  A_{g'}\subset A_{gg'}$, for $g,g'\in G$, where we are denoting by juxtaposition the respective products on $A$ and $G$.
\end{definition}

\begin{definition}\label{DEF 1.5}\rm
Let $G$ be an abelian grading group with product denoted by juxtaposition. We say that $(L,A)$ is a   {\it $G$-graded Lie-Rinehart algebra}, or just {\it graded Lie-Rinehart algebra}, if $L$ is a $G$-graded Lie ${\mathbb F}$-algebra and $A$ is a $G$-graded (associative and commutative) ${\mathbb F}$-algebra satisfying
\begin{eqnarray}
&& A_hL_g \subset L_{hg}, \label{EQ1}\\
&& \rho(L_g)(A_h) \subset A_{gh}, \label{EQ2}
\end{eqnarray}
for any $g,h\in G$.
\end{definition}

\noindent Split Lie algebras, graded Lie algebras and split Lie-Rinehart algebras are examples of graded Lie-Rinehart algebras. So the present paper extends the results obtained in \cite{YoLie,YoGradLie,YoSplitLieRinehart}.

We denote the {\em $G$-supports} of the grading in $L$ and in $A$ to the sets $$\Sigma_G := \{g \in G \setminus \{1\} : L_g \neq 0\} \hspace{1cm} \Lambda_G := \{h \in G \setminus \{1\} : A_h \neq 0\}$$  respectively, so we can write $$L = L_1 \oplus \Bigl(\bigoplus_{g \in \Sigma_G}L_g \Bigr) \hspace{1cm} A = A_1 \oplus \Bigl(\bigoplus_{h \in \Lambda_G}A_h \Bigr).$$

\medskip

Throughout this paper $(L,A)$ is a $G$-graded Lie-Rinehart algebra with restrictions neither on the dimension of $L,A$ nor the abelian group $G$, nor on the base field ${\hu F}$.

\section{Connections in $\Sigma_G$. Decompositions}

Let $G$ be an abelian grading group and $(L,A)$  a $G$-graded Lie-Rinehart algebra.
We define $\Sigma_G^{-1} := \{g^{-1} : g \in \Sigma_G\}$. In a similar way we define $\Lambda_G^{-1} := \{g^{-1} : g \in \Lambda_G\}.$ Finally, let us denote $$\hbox{ $\Sigma_G^{\pm} := \Sigma_G \cup \Sigma_G^{-1}$ \hspace{0.1cm} and \hspace{0.1cm} $\Lambda_G^{\pm} := \Lambda_G \cup \Lambda_G^{-1}.$}$$

\begin{definition}\label{connection}\rm
Let $g,g' \in \Sigma_G$. We say that $g$ is {\it $\Sigma_G$-connected} to $g'$ if there exists $\{g_1,g_2,\dots,g_n\}\subset \Sigma_G^{\pm} \cup \Lambda_G^{\pm}$ such that

\begin{enumerate}
\item[i.] $g_1 = g$.
\item[ii.] $\{g_1,g_1g_2,\dots,g_1g_2\cdots g_{n-1}\} \subset \Sigma_G^{\pm}.$
\item[iii.] $g_1g_2 \cdots g_n \in \{g',(g')^{-1}\}.$
\end{enumerate}

\noindent We also say that $\{g_1,\dots,g_n\}$ is a {\it $\Sigma_G$-connection} from $g$ to $g'$.
\end{definition}

The next result shows the $\Sigma_G$-connection relation is of equivalence. Its proof is virtually identical to the one of \cite[Proposition 2.1]{YoGradLie}, but we add a sketch of the same.

\begin{proposition}\label{pro1}
The relation $\sim$ in $\Sigma_G$, defined by $g \sim g'$ if and only if $g$ is $\Sigma_G$-connected to $g$, is of equivalence.
\end{proposition}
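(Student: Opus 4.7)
My plan is to verify reflexivity, symmetry, and transitivity in turn, following the standard template for connection-type relations, with the abelianness of $G$ playing the key role in handling partial products.

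For reflexivity, given $g\in\Sigma_G$, the singleton $\{g_1\}=\{g\}$ is a $\Sigma_G$-connection from $g$ to itself: condition (ii) reduces to $g\in\Sigma_G^{\pm}$ and condition (iii) gives $g_1=g\in\{g,g^{-1}\}$.

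For symmetry, suppose $g\sim g'$ via $\{g_1,\dots,g_n\}$. Two cases arise depending on whether $g_1g_2\cdots g_n$ equals $g'$ or $(g')^{-1}$. In the first case, I would propose the reversed connection $\{g',g_n^{-1},g_{n-1}^{-1},\dots,g_2^{-1}\}$, whose partial products telescope (using that $G$ is abelian) to $g',\,g_1g_2\cdots g_{n-1},\,g_1g_2\cdots g_{n-2},\dots,g_1g_2,\,g_1=g$. All intermediate terms lie in $\Sigma_G^{\pm}$ by condition (ii) of the original connection and by $g'\in\Sigma_G$. In the second case ($g_1\cdots g_n=(g')^{-1}$), I would instead propose $\{g',g_n,g_{n-1},\dots,g_2\}$; again by commutativity the partial product of length $k\geq 2$ equals $(g_1g_2\cdots g_{n-k+1})^{-1}$, which lies in $\Sigma_G^{\pm}$ because $\Sigma_G^{\pm}$ is closed under inversion, and the final product is $g_1^{-1}=g^{-1}\in\{g,g^{-1}\}$.

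For transitivity, suppose $g\sim g'$ via $\{g_1,\dots,g_n\}$ and $g'\sim g''$ via $\{h_1,\dots,h_m\}$. If $g_1\cdots g_n=g'$, the concatenation $\{g_1,\dots,g_n,h_2,\dots,h_m\}$ works directly: the partial products up to length $n$ are handled by the first connection, and from length $n+1$ onward they coincide with $h_1h_2,\dots,h_1\cdots h_m$, handled by the second. If instead $g_1\cdots g_n=(g')^{-1}$, I would concatenate with the inverted tail: $\{g_1,\dots,g_n,h_2^{-1},h_3^{-1},\dots,h_m^{-1}\}$. By abelianness, the partial product of length $n+j$ equals $(h_1h_2\cdots h_{j+1})^{-1}$, which sits in $\Sigma_G^{\pm}$ for $1\leq j\leq m-2$ by closure under inversion, and the final product is $(h_1\cdots h_m)^{-1}\in\{g'',(g'')^{-1}\}$. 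Small cases ($n=1$ or $m=1$) are handled trivially from the definition.

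The main obstacle is keeping the bookkeeping of partial products straight in the second symmetry case and in the second transitivity case, where one must check that every intermediate product, after the switch to inverted elements, still lies in $\Sigma_G^{\pm}$. This is where the abelianness of $G$ is essential, since it is what makes the telescoping identities like $(g')^{-1}h_2^{-1}\cdots h_{j+1}^{-1}=(h_1\cdots h_{j+1})^{-1}$ hold; without commutativity the partial products would not simplify and closure under inversion of $\Sigma_G^{\pm}$ could not be exploited.
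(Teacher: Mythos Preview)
Your proof is correct and follows essentially the same approach as the paper's: the same singleton for reflexivity, the same reversed/inverted connections for symmetry (noting that $g_1\cdots g_n$ and $g'$ coincide in the first case and $g_1^{-1}\cdots g_n^{-1}=g'$ in the second), and the same concatenation with possibly inverted tail for transitivity. You provide more explicit bookkeeping on the partial products and highlight the role of the abelianness of $G$, but the underlying argument is identical.
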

\begin{proof}
$\{g\}$ is a $\Sigma_G$-connection from $g$ to itself and therefore $g \sim g$.

If $g \sim g'$ and $\{g_1,\dots, g_n\}$ is a $\Sigma_G$-connection from $g$ to $g'$, then $$\{g_1 \cdots g_n, g_n^{-1}, g_{n-1}^{-1},
\dots, g_2^{-1}\} \subset \Sigma_G^{\pm} \cup \Lambda_G^{\pm}$$ is a $\Sigma_G$-connection from $g'$ to $g$ in case $g_1 \cdots g_n = g'$, and $$\{g_1^{-1} \cdots g_n^{-1}, g_n, g_{n-1},\dots,g_2\} \subset \Sigma_G^{\pm} \cup \Lambda_G^{\pm}$$ in case $g_1 \cdots g_n = (g')^{-1}$.
Therefore $g' \sim g$.

Finally, suppose $g \sim g'$ and $g' \sim g''$, and write
$\{g_1,\dots, g_n\}$ for a $\Sigma_G$-connection from $g$ to $g'$ and $\{{g'}_1,\dots, {g'}_m\}$ for a $\Sigma_G$-connection from
$g'$ to $g''$. If $m > 1$, then
$\{g_1,\dots,g_n,{g'}_2,\dots,{g'}_m\}$ is a $\Sigma_G$-connection from $g$ to $g''$ in case $g_1 \cdots g_n = g'$, and $\{g_1,\dots,g_n,{g'}_2^{-1},\dots,{g'}_m^{-1}\}$ in case $g_1 \cdots g_n = (g')^{-1}$. If $m = 1$, then $g'' \in \{g', (g')^{-1}\}$ and so $\{g_1,\dots, g_n\}$ is a $\Sigma_G$-connection from $g$ to $g''$. Therefore $g \sim g''$ and $\sim$ is of equivalence.
\end{proof}

\noindent By Proposition \ref{pro1} the $\Sigma_G$-connection relation defined in $\Sigma_G$ is of equivalence. From here, we can consider the quotient set $$\Sigma_G / \sim := \{[g]: g \in \Sigma_G\},$$ becoming $[g]$ the set of elements in $\Sigma_G$ which are $\Sigma_G$-connected to $g$. Our next goal is to associate an (adequate) ideal $I_{[g]}$ of the Lie-Rinehart algebra $(L,A)$ to each $[g]$.

\begin{lemma}\label{C_g cerrado}
If $g' \in [g] $ and $g'', g'g'' \in \Sigma_G$, then $g'', g'g'' \in [g]$.
\end{lemma}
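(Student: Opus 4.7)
The hypothesis $g' \in [g]$ furnishes a $\Sigma_G$-connection $\{g_1, g_2, \ldots, g_n\}$ from $g$ to $g'$: that is, $g_1 = g$, all partial products $g_1 g_2 \cdots g_k$ for $1 \le k \le n-1$ lie in $\Sigma_G^{\pm}$, and $g_1 g_2 \cdots g_n \in \{g', (g')^{-1}\}$. The plan is to append one or two further terms chosen from $\{g'', (g'')^{-1}, g', (g')^{-1}\}$ to this sequence so as to terminate, up to inversion, at $g''$ and at $g'g''$, thereby exhibiting the two required equivalences $g'' \sim g$ and $g'g'' \sim g$ and concluding by transitivity of $\sim$ (Proposition \ref{pro1}).

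For $g'g'' \sim g$, I would append $g''$ when $g_1 \cdots g_n = g'$ (new terminal product $g'g''$) and $(g'')^{-1}$ when $g_1 \cdots g_n = (g')^{-1}$ (new terminal product $(g'g'')^{-1}$). For $g'' \sim g$, in the first case I would append the pair $g'',\, (g')^{-1}$, whose terminal product is $g'g''(g')^{-1} = g''$ since $G$ is abelian; in the second case I would append the pair $(g'')^{-1},\, g'$, with terminal product $(g'g'')^{-1} g' = (g'')^{-1}$.

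The remaining verification, namely that each of the four constructed sequences satisfies the conditions of Definition \ref{connection}, reduces to checking that the newly introduced intermediate partial products lie in $\Sigma_G^{\pm}$. A direct inspection shows these new intermediate products are always one of $g', (g')^{-1}, g'g'', (g'g'')^{-1}$, each of which lies in $\Sigma_G^{\pm}$ precisely because $g', g'', g'g''$ all belong to $\Sigma_G$ by hypothesis; likewise the appended terms themselves lie in $\Sigma_G^{\pm} \subset \Sigma_G^{\pm} \cup \Lambda_G^{\pm}$. I therefore do not anticipate any substantive obstacle: the argument is essentially a bookkeeping exercise in the abelian group $G$, and the assumption $g'', g'g'' \in \Sigma_G$ is exactly what makes the chosen extensions admissible.
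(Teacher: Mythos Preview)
Your proposal is correct and is precisely the standard argument: extend the given $\Sigma_G$-connection by one or two elements from $\Sigma_G^{\pm}$, using the hypotheses $g',g'',g'g''\in\Sigma_G$ to guarantee that all new intermediate partial products stay in $\Sigma_G^{\pm}$. The paper itself does not spell out the proof (it simply refers to the analogous lemma in \cite{YoGradLie}), so your write-up is in fact more detailed than what appears here and matches the intended reasoning.
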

\begin{proof}
Analogue to the proof of \cite[Lemma 2.1]{YoGradLie}.
\end{proof}

For $[g]$, with $g \in \Sigma_G$, we define $$L_{[g],1} := \Bigl(\sum\limits_{g' \in [g] \cap \Lambda_G}A_{(g')^{-1}}L_{g'} + \sum\limits_{g' \in [g]}[L_{(g')^{-1}}, L_{g'}]\Bigr) \subset L_1$$ and $$V_{[g]} := \bigoplus\limits_{g' \in [g]}L_{g'}.$$ Then we consider the following (graded) subspace of $L$, $$I_{[g]} := L_{[g],1} \oplus V_{[g]}.$$

\begin{proposition}\label{pro-2}
For any $[g] \in \Sigma_G/\sim$, the following assertions hold.
\begin{enumerate}
\item[i)] $[I_{[g]}, I_{[g]}] \subset I_{[g]}$.
\item[ii)] $AI_{[g]} \subset I_{[g]}.$
\end{enumerate}
\end{proposition}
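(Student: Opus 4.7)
The plan is to verify both inclusions by expanding $I_{[g]}=L_{[g],1}\oplus V_{[g]}$ according to its defining summands and, inside $L_{[g],1}$, according to the two kinds of generators $A_{(g')^{-1}}L_{g'}$ and $[L_{(g')^{-1}}, L_{g'}]$; one then checks that every bracket or $A$-action on such a generator lands back in $I_{[g]}$. The main tools are the grading compatibilities~\eqref{EQ1}--\eqref{EQ2}, the fundamental Lie--Rinehart identity~\eqref{fundamental} in the equivalent form $a[u,w]=[u,aw]-\rho(u)(a)w$, and Lemma~\ref{C_g cerrado}, which ensures that any new element of $\Sigma_G$ produced along the computation stays inside $[g]$.

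For~(ii), take $a\in A_h$ and $v\in L_{g'}$ with $g'\in[g]$; then $av\in L_{hg'}$. If $h=1$ the conclusion is immediate; if $hg'=1$ then $h=(g')^{-1}$ and $av\in A_{(g')^{-1}}L_{g'}\subset L_{[g],1}$; otherwise one extends a connection from $g$ to $g'$ by the letter $h$ (or $h^{-1}$, according to whether that connection ends at $g'$ or $(g')^{-1}$) and applies Lemma~\ref{C_g cerrado} to obtain $hg'\in[g]$, so $av\in V_{[g]}$. For a generator of $L_{[g],1}$, the product $A_h\cdot A_{(g')^{-1}}L_{g'}$ reduces by associativity of the $A$-module structure to the previous case, while $A_h\cdot[L_{(g')^{-1}},L_{g'}]$ is handled via $a[u,w]=[u,aw]-\rho(u)(a)w$: the first summand is a bracket treated in~(i), the second an $A$-action already covered.

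For~(i), split $[I_{[g]},I_{[g]}]$ into its four parts. The mixed brackets $[L_{[g],1},V_{[g]}]$ and $[V_{[g]},L_{[g],1}]$ remain in $V_{[g]}$ purely by grading, since $L_{[g],1}\subset L_1$. For $[V_{[g]},V_{[g]}]$, a bracket $[L_{g'},L_{g''}]$ with $g',g''\in[g]$ lies in $L_{g'g''}$: if $g'g''=1$ it belongs to $[L_{(g')^{-1}},L_{g'}]\subset L_{[g],1}$, and if $g'g''\neq 1$ Lemma~\ref{C_g cerrado} puts $g'g''\in[g]$, so the bracket sits in $V_{[g]}$. The delicate piece is $[L_{[g],1},L_{[g],1}]$: for two type-$1$ generators I would expand $[aw,bv]=ab[w,v]+a\rho(w)(b)\,v-b\rho(v)(a)\,w$ and identify each summand either as a generator of $L_{[g],1}$ or as a bracket reducible via~\eqref{fundamental} to one; brackets involving a type-$2$ generator $[u,w]$ are cracked open by Jacobi, e.g.\ $[x,[u,w]]=[[x,u],w]+[u,[x,w]]$, so that each inner bracket has the prescribed form $[L_{(g')^{-1}},L_{g'}]$.

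The principal obstacle is $[L_{[g],1},L_{[g],1}]$, since type-$1$ bracket expansions spawn anchor cross-terms whose $G$-degrees must be tracked precisely to be re-expressed as generators of $L_{[g],1}$. Applying the anchor-grading rule~\eqref{EQ2} in tandem with Lemma~\ref{C_g cerrado} is what makes this bookkeeping close.
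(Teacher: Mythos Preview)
Your outline is correct and matches the paper's proof: the same four-way split of $[I_{[g]},I_{[g]}]$, the same use of Lemma~\ref{C_g cerrado} for $[V_{[g]},V_{[g]}]$, and the same tools (\eqref{fundamental}, \eqref{EQ2}, Jacobi) for the remaining pieces and for the six cases in~(ii). One simplification the paper uses that you might adopt for the hardest case $[L_{[g],1},L_{[g],1}]$: since each type-$1$ generator $A_{(g')^{-1}}L_{g'}$ already sits in $L_1$, one can apply~\eqref{fundamental} directly to $[L_1,\,A_{(g'')^{-1}}L_{g''}]\subset A_{(g'')^{-1}}[L_1,L_{g''}]+\rho(L_1)(A_{(g'')^{-1}})L_{g''}\subset A_{(g'')^{-1}}L_{g''}\subset L_{[g],1}$, which avoids the full expansion of $[aw,bv]$ and the attendant cross-term bookkeeping you flag as the principal obstacle.
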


\begin{proof}
i) We have
\begin{align}\label{cero1}
[I_{[g]}, I_{[g]}] &= [L_{[g],1} \oplus V_{[g]}, L_{[g],1} \oplus V_{[g]}]\nonumber\\
& \subset [L_{[g],1},L_{[g],1}] + [L_{[g],1},V_{[g]}]+ [V_{[g]},L_{[g],1}] + [V_{[g]}, V_{[g]}].
\end{align}

Let us consider the second summand in (\ref{cero1}). If there exist $g' \in [g]$ such that $A_{(g')^{-1}}L_{g'}$ is non-zero we have by Equation \eqref{EQ1} that $A_{(g')^{-1}}L_{g'} \subset L_{(g')^{-1}g'} \subset L_1$. If some $g' \in [g]$ satisfies $[L_{(g')^{-1}}, L_{g'}] \neq 0,$ we get as in the previous case $[L_{(g')^{-1}}, L_{g'}] \subset L_1$.
We have $[L_{[g],1}, L_{g'}] \subset L_{g'} \subset V_{[g]}$. In a similar way $[V_{[g]},L_{[g],1}] \subset V_{[g]}$ and we conclude for the second and third summand in Equation (\ref{cero1}) that
$$[L_{[g],1},V_{[g]}]+ [V_{[g]},L_{[g],1}] \subset V_{[g]}.$$
Consider now the fourth summand in (\ref{cero1}). Given $g', g'' \in [g]$ we get $[L_{g'}, L_{g''}] \subset L_{g'g''}.$ If $g'g'' = 1$ we have $[L_{g'}, L_{g''}] \subset L_{[g],1}.$ Suppose $g'g''\in \Sigma_G,$ then by Lemma \ref{C_g cerrado} we have $[L_{g'}, L_{g''}] \subset L_{g'g''} \subset V_{[g]}$.

Finally, for $g',g'' \in [g]$ the first summand of (\ref{cero1}) is $$\Bigl[\sum\limits_{g' \in [g]\cap \Lambda_G}A_{(g')^{-1}}L_{g'} + \sum\limits_{g' \in [g]}[L_{g'},L_{(g')^{-1}}],  \sum\limits_{g'' \in [g] \cap \Lambda_G}A_{(g'')^{-1}}L_{g''} + \sum\limits_{g'' \in [g]}[L_{g''},L_{(g'')^{-1}}]\Bigr] \subset$$

$$\sum\limits_{g',g'' \in [g] \cap \Lambda_G} [A_{(g')^{-1}}L_{g'},A_{(g'')^{-1}}L_{g''}]  +      \sum\limits_{g' \in [g] \cap \Lambda_G, g'' \in [g]} \Bigl[A_{(g')^{-1}}L_{g'}, [L_{g''},L_{(g'')^{-1}}]\Bigr]$$

\begin{equation}\label{MasSumandos}
+\sum\limits_{g' \in [g], g'' \in [g] \cap \Lambda_G}\Bigl[[L_{g'},L_{(g')^{-1}}],A_{(g'')^{-1}}L_{g''}\Bigr]   +    \sum\limits_{g',g'' \in [g]}\Bigl[[L_{g'},L_{(g')^{-1}}], [L_{g''},L_{(g'')^{-1}}]\Bigr]
\end{equation}

For the first summand in \eqref{MasSumandos}, if there exist $g',g'' \in [g] \cap \Lambda_G$ such that $[A_{(g')^{-1}}L_{g'},A_{(g'')^{-1}}L_{g''}]\neq 0$, by Equation \eqref{fundamental} and Equation \eqref{EQ2} is
\begin{align*}
[L_{(g')^{-1}g'},A_{(g'')^{-1}}L_{g''}] &\subset A_{(g'')^{-1}}[L_{(g')^{-1}g'},L_{g''}] + \rho(L_{(g')^{-1}g'})(A_{(g'')^{-1}})L_{g''}\\
& \subset A_{(g'')^{-1}}L_{g''}  \subset L_{[g],1}
\end{align*}

If there exist $g' \in [g] \cap \Lambda_G, g'' \in [g]$ such that the second summand of \eqref{MasSumandos} is non-zero, by Equation \eqref{fundamental} and Equation \eqref{EQ2} we have
\begin{align*}
\bigl[A_{(g')^{-1}}L_{g'},[L_{g''},L_{(g'')^{-1}}]\bigr] &= A_{(g')^{-1}}\bigl[[L_{g''},L_{(g'')^{-1}}],L_{g'}\bigr] +\rho([L_{g''},L_{(g'')^{-1}}])(A_{(g')^{-1}})L_{g'}\\
& \subset A_{(g')^{-1}}L_{g'} \subset L_{[g],1}
\end{align*}
The proof for the third summand in \eqref{MasSumandos} is similar.

For the fourth summand in \eqref{MasSumandos}, taking now into account the bilinearity of the product and Jacobi identity we obtain
$$\sum\limits_{g',g'' \in [g]}\bigl[[L_{g'},L_{(g')^{-1}}], [L_{g''},L_{(g'')^{-1}}]\bigr]
\subset $$
$$ \sum\limits_{g',g'' \in [g]}\Bigl(\Bigl[L_{g'},\bigl[L_{(g')^{-1}}, [L_{g''},L_{(g'')^{-1}}]\bigr]\Bigr] + \Bigl[L_{(g')^{-1}},\bigl[L_{g'}, [L_{g''},L_{(g'')^{-1}}]\bigr]\Bigr]\Bigr) \subset $$
$$\sum\limits_{g' \in [g]} \bigl([ L_{g'}, L_{(g')^{-1}}]+ [L_{(g')^{-1}},L_{g'}]\Bigr) \subset \sum\limits_{g' \in [g]} [L_{g'},L_{(g')^{-1}}]=L_{[g],1}.$$  Therefore all summands in \eqref{MasSumandos} are contained in $L_{[g],1}$, and consequently all summands in \eqref{cero1} are included in $I_{[g]}$ as desired.

ii) Observe that $$AI_{[g]} = \Bigl(A_1 \oplus \bigl(\bigoplus_{k \in \Lambda_G}A_k\bigr)\Bigr) \Bigl(\Bigl(\sum\limits_{g' \in [g] \cap \Lambda_G}A_{(g')^{-1}}L_{g'} + \sum\limits_{g' \in [g]}[L_{(g')^{-1}}, L_{g'}]\Bigr) \oplus \bigoplus\limits_{g' \in [g]}L_{g'}\Bigr).$$ We have to consider six cases:

$\bullet$ For $g' \in [g] \cap \Lambda_G$, since $L$ is an $A$-module we get
\begin{equation}\label{EQnuevaa}
A_1(A_{(g')^{-1}}L_{g'}) = (A_1A_{(g')^{-1}})L_{g'} \subset A_{(g')^{-1}}L_{g'} \subset L_{[g],1}.
\end{equation}

$\bullet$ For $g' \in [g]$, by Equation \eqref{fundamental} we get $$A_1[L_{(g')^{-1}},L_{g'}] \subset [L_{(g')^{-1}},A_1L_{g'}] + \rho(L_{(g')^{-1}})(A_1)L_{g'}.$$ Since $A_1L_{g'} \subset L_{g'}$ we get $[L_{(g')^{-1}},A_1L_{g'}] \subset [L_{(g')^{-1}},L_{g'}]$. Also, taking into account Equation \eqref{EQ2} we obtain $\rho(L_{(g')^{-1}})(A_1) \subset A_{(g')^{-1}}$. If $A_{(g')^{-1}} \neq 0$ (otherwise is trivial), $(g')^{-1} \in \Lambda_G$ therefore $\rho(L_{(g')^{-1}})(A_1)L_{g'} \subset A_{(g')^{-1}}L_{g'}$ with $g' \in [g] \cap \Lambda_G$. From here,
\begin{equation}\label{2}
A_1[L_{(g')^{-1}},L_{g'}] \subset L_{[g],1}.
\end{equation}

$\bullet$ For $g' \in [g],$ from the action of $A$ over $L$ it follows
\begin{equation}\label{3}
A_1L_{g'} \subset L_{g'} \subset V_{[g]}.
\end{equation}

$\bullet$ For $k \in \Lambda_G, g' \in [g] \cap \Lambda_G$, using that $A$ is conmmutative and $L$ is an $A$-module,
\begin{align*}
A_k(A_{(g')^{-1}}L_{g'}) &= (A_kA_{(g')^{-1}})L_{g'} = A_{(g')^{-1}}(A_kL_{g'}) \subset A_{(g')^{-1}}L_{kg'} \subset L_k
\end{align*}
If $L_k \neq 0$ (otherwise is trivial), we have $k \in \Sigma_G$ and then with the $\Sigma_G$-connection $\{g',k,(g')^{-1}\}$ we have $k \in [g]$. That is,
\begin{equation}\label{EQnuevaaa}
L_k \subset V_{[g]}.
\end{equation}

$\bullet$ For $k \in \Lambda_G, g' \in [g]$ using Equation \eqref{fundamental} and Equation \eqref{EQ2} we obtain
\begin{align*}
A_k[L_{(g')^{-1}},L_{g'}] &\subset [L_{(g')^{-1}},A_kL_{g'}] + \rho(L_{(g')^{-1}})(A_k)L_{g'}\\
& \subset [L_{(g')^{-1}},L_{kg'}] + A_{k(g')^{-1}}L_{g'} \subset L_k
\end{align*}
As in the previous case, if $L_k \neq 0$ we get $k \in \Sigma_G$ and $k \in [g]$. That is,
\begin{equation}\label{5}
A_k[L_{(g')^{-1}},L_{g'}] \subset V_{[g]}.
\end{equation}

$\bullet$ For $k \in \Lambda_G, g' \in [g]$ we obtain $A_kL_{g'} \subset L_{kg'}$. If $kg' \in \Sigma_G,$ by using the $\Sigma_G$-connection $\{g',k\}$ we get $g' \sim kg',$ and by transitivity $kg' \in [g]$, meaning that
\begin{equation}\label{6}
A_kL_{g'} \subset V_{[g]}.
\end{equation}
From Equations \eqref{EQnuevaa}-\eqref{6}, assertion ii) is proved.
\end{proof}

\begin{proposition}\label{pro-9}
Let $[g],[h] \in \Sigma_G / \sim$ with $[g] \neq [h]$. Then $[I_{[g]}, I_{[h]}] = 0$.
\end{proposition}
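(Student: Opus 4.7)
The plan is to expand the bracket bilinearly as
\begin{equation*}
[I_{[g]}, I_{[h]}] \subset [L_{[g],1}, L_{[h],1}] + [L_{[g],1}, V_{[h]}] + [V_{[g]}, L_{[h],1}] + [V_{[g]}, V_{[h]}]
\end{equation*}
and show each of the four summands vanishes. The entire argument rests on one recurring engine: any surviving bracket or anchor image between a homogeneous piece sitting in grade $g' \in [g]$ and a homogeneous piece sitting in grade $h' \in [h]$ produces a short $\Sigma_G$-connection (or forces $g' = h'$) witnessing $g' \sim h'$, hence $[g] = [h]$, contradicting the hypothesis.

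First I would dispatch the core step $[V_{[g]}, V_{[h]}] = 0$. For $g' \in [g]$, $h' \in [h]$ the grading of $L$ gives $[L_{g'}, L_{h'}] \subset L_{g'h'}$. If $g'h' = 1$ then $h' = (g')^{-1}$, and the one-term sequence $\{g'\}$ is, by Definition~\ref{connection}(iii), a $\Sigma_G$-connection from $g'$ to $(g')^{-1} = h'$. Otherwise $g'h' \in \Sigma_G$ and the two-term sequences $\{g', h'\}$ and $\{h', g'\}$ connect $g'$ and $h'$ respectively to the common element $g'h'$. Either alternative forces $[g] = [h]$, so the bracket is zero.

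Next I would record a companion vanishing: for $g' \in [g]$, $h' \in [h]$ one has $\rho(L_{h'})(A_{(g')^{-1}}) = 0$ (and symmetrically under $g' \leftrightarrow h'$). Indeed Equation~\eqref{EQ2} locates this inside $A_{h'(g')^{-1}}$, and nonvanishing forces either $h'(g')^{-1} = 1$ (so $g' = h'$) or $h'(g')^{-1} \in \Lambda_G$; in the latter case $\{g', h'(g')^{-1}\}$ is a valid $\Sigma_G$-connection from $g'$ to $h'$, since $g_1 = g' \in \Sigma_G^{\pm}$ and $g_1 g_2 = h'$. Each branch contradicts $[g] \neq [h]$.

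With these two tools the cross terms fall. For $[L_{[g],1}, V_{[h]}]$, a summand $[A_{(g')^{-1}} L_{g'}, L_{h'}]$ with $g' \in [g] \cap \Lambda_G$ is rewritten via \eqref{fundamental} as $A_{(g')^{-1}}[L_{g'}, L_{h'}] + \rho(L_{h'})(A_{(g')^{-1}}) L_{g'}$, both pieces vanishing by the preceding two steps; a summand $[[L_{(g')^{-1}}, L_{g'}], L_{h'}]$ collapses by Jacobi to brackets of the form $[L_{(g')^{\pm 1}}, L_{h'}]$, zero after noting that $(g')^{-1} \in [g]$ whenever $L_{(g')^{-1}} \neq 0$ (via the one-term connection $\{g'\}$). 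Symmetry handles $[V_{[g]}, L_{[h],1}]$. The last block $[L_{[g],1}, L_{[h],1}]$ splits into four generator combinations, each reduced by \eqref{fundamental}, the $A$-linearity of $\rho$ (yielding $\rho(A_{(g')^{-1}} L_{g'}) = A_{(g')^{-1}} \rho(L_{g'})$), and iterated Jacobi to pieces already treated. The main obstacle will be the bookkeeping in this final block, especially the terms $\rho([L_{(h')^{-1}}, L_{h'}])(A_{(g')^{-1}})$: these need to be expanded, using that $\rho$ is a Lie homomorphism, into $[\rho(L_{(h')^{-1}}), \rho(L_{h'})](A_{(g')^{-1}})$, and then seen to annihilate by two applications of the companion vanishing.
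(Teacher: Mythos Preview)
Your proposal is correct and follows essentially the same route as the paper: the same four-summand decomposition, $[V_{[g]},V_{[h]}]=0$ first via short $\Sigma_G$-connections, then the cross terms via \eqref{fundamental} and Jacobi, and finally $[L_{[g],1},L_{[h],1}]$ by iterating the same tools. Your isolation of the auxiliary fact $\rho(L_{h'})(A_{(g')^{-1}})=0$ as a reusable ``companion vanishing'' is a mild organizational improvement over the paper, which instead rebuilds the analogous connections inline at each occurrence in the final block.
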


\begin{proof}
We have
\begin{align}\label{cuatro1}
[I_{[g]}, I_{[h]}] &= [L_{[g],1} \oplus V_{[g]}, L_{[h],1} \oplus V_{[h]}] \nonumber\\
& \subset [L_{[g],1},L_{[h],1}] + [L_{[g],1} V_{[h]}] + [V_{[g]}, L_{[h],1}] +[V_{[g]}, V_{[h]}].
\end{align}

Consider the above fourth summand $[V_{[g]},V_{[h]}]$ and suppose there exist $g' \in [g]$ and $h' \in [h]$ such that $[L_{g'},L_{h'}]\neq 0$. As necessarily $g' \neq (h')^{-1}$, then $g'h' \in \Sigma_G$. Since $g \sim g'$ and $g'h' \in \Sigma_G,$ by Lemma \ref{C_g cerrado} we conclude $g \sim g'h'.$ Similarly we can prove $h \sim g'h',$ so we conclude $g \sim h,$ a contradiction. Hence $[L_{g'},L_{h'}] = 0$ and so
\begin{equation}\label{nueve1}
[V_{[g]},V_{[h]}]=0.
\end{equation}

For the second summand in \eqref{cuatro1} suppose there exist $g' \in [g] \cap \Lambda_G$ and $h' \in [h]$ such that $\bigl[A_{(g')^{-1}}L_{g'} + [L_{(g')^{-1}},L_{g'}],L_{h'}\bigr] \neq 0$. Then, $[A_{(g')^{-1}}L_{g'},L_{h'}]$ or
$\bigl[[L_{(g')^{-1}},L_{g'}],L_{h'}\bigr]$ is non-zero. In the first case, by Equation \eqref{fundamental} is
\begin{align*}
[A_{(g')^{-1}}L_{g'},L_{h'}] &= A_{(g')^{-1}}[L_{h'},L_{g'}] +\rho(L_{h'})(A_{(g')^{-1}})L_{g'}\\
& \subset A_{(g')^{-1}}L_{h'g'} +A_{h'(g')^{-1}}L_{g'}.
\end{align*}
If either $L_{h'g'}\neq 0$ or $A_{h'(g')^{-1}}\neq 0$ with the $\Sigma_G$-connections $\{g',h',(g')^{-1}\}$ or $\{g',h'(g')^{-1}\}$, respectively, we conclude $g' \sim h'$, that is, $[g]=[h]$, a contradiction. In the second case, by using Jacobi identity is $$\bigl[[L_{(g')^{-1}},L_{g'}],L_{h'}\bigr] = \bigl[[L_{g'},L_{h'}],L_{(g')^{-1}}\bigr] + \bigl[[L_{h'},L_{(g')^{-1}}],L_{g'}\bigr]$$ and by Equation \eqref{nueve1} that $$[L_{g'},L_{h'}] = [L_{h'},L_{(g')^{-1}}] = 0.$$ Hence  $\bigl[[L_{(g')^{-1}},L_{g'}],L_{h'}\bigr] = 0$ and we show
\begin{equation}\label{nueve2}
[L_{[g],1},V_{[h]}] = 0.
\end{equation}
Similarly can be proved for the third summand $[V_{[g]},L_{[h],1}] = 0.$

Finally, the first summand $[L_{[g],1},L_{[h],1}]$ in \eqref{cuatro1} is $$\Bigl[\sum\limits_{g' \in [g] \cap \Lambda_G}A_{(g')^{-1}}L_{g'} + \sum\limits_{g' \in [g]}[L_{(g')^{-1}}, L_{g'}], \sum\limits_{h' \in [h] \cap \Lambda_G}A_{(h')^{-1}}L_{h'} + \sum\limits_{h' \in [h]}[L_{(h')^{-1}}, L_{h'}]\Bigr] \subset$$

$$\sum\limits_{g'\in [g] \cap \Lambda_G,h' \in [h] \cap \Lambda_G} [A_{(g')^{-1}}L_{g'},A_{(h')^{-1}}L_{h'}] +      \sum\limits_{g' \in [g] \cap \Lambda_G, h'\in [h]} \Bigl[A_{(g')^{-1}}L_{g'}, [L_{(h')^{-1}}, L_{h'}]\Bigr]$$

\begin{equation}\label{MasSumandos2}
+\sum\limits_{g' \in [g],h'\in [h] \cap \Lambda_G}\Bigl[[L_{(g')^{-1}},L_{g'}],A_{(h')^{-1}}L_{h'}\Bigr]   +    \sum\limits_{g' \in [g], h'\in [h]}\Bigl[[L_{(g')^{-1}},L_{g'}], [L_{(h')^{-1}}, L_{h'}]\Bigr]
\end{equation}

For the first summand in \eqref{MasSumandos2}, if there exist $g'\in [g] \cap \Lambda_G,h' \in [h] \cap \Lambda_G$ such that $[A_{(g')^{-1}}L_{g'},A_{(h')^{-1}}L_{h'}]\neq 0$, by Equation \eqref{fundamental} and Equation \eqref{EQ2} is
{\small \begin{align*}
[A_{(g')^{-1}}L_{g'},A_{(h')^{-1}}L_{h'}] &\subset A_{(h')^{-1}}[A_{(g')^{-1}}L_{g'},L_{h'}] +\rho(A_{(g')^{-1}}L_{g'})(A_{(h')^{-1}})L_{h'}\\
&\subset A_{(h')^{-1}}\Bigl(A_{(g')^{-1}}[L_{h'},L_{g'}] + \rho(L_{h'})(A_{(g')^{-1}})L_{g'}\Bigr)\\
&+\rho(A_{(g')^{-1}}L_{g'})(A_{(h')^{-1}})L_{h'}\\
&\subset A_{(h')^{-1}}\Bigl(A_{(g')^{-1}}L_{h'g'} + A_{h'(g')^{-1}}L_{g'}\Bigr)+ A_{(g')^{-1}}A_{g'(h')^{-1}}L_{h'}\\
&\subset A_{(h')^{-1}}A_{(g')^{-1}}L_{h'g'} + A_{(h')^{-1}}A_{h'(g')^{-1}}L_{g'} + A_{(g')^{-1}}A_{g'(h')^{-1}}L_{h'}
\end{align*}}
If $A_{(h')^{-1}}A_{(g')^{-1}}L_{h'g'}$ is non-zero with the connection $\{g',h',(g')^{-1}\}$ we conclude $[g]=[h]$, a contradiction. Similarly, if $A_{(h')^{-1}}A_{h'(g')^{-1}}L_{g'}$ or $A_{(g')^{-1}}A_{g'(h')^{-1}}L_{h'}$ is non-zero with the $\Sigma_G$-connection $\{g',h'(g')^{-1}\}$ or $\{h',g'(h')^{-1}\}$, respectively, we obtain the same contradiction.

For $g'\in[g],h' \in [h] \cap \Lambda_G$ in the third summand of \eqref{MasSumandos2} using Equation \eqref{fundamental} we obtain two summands. For the first summand we use Jacobi identity and Equation \eqref{nueve1}, and for the second one we apply Equation \eqref{EQ2} and that $\rho$ is a Lie algebra homomorphism, we have
\begin{align*}
\Bigl[[L_{(g')^{-1}},L_{g'}],A_{(h')^{-1}}L_{h'}\Bigr] &= A_{(h')^{-1}}\Bigl[[L_{(g')^{-1}},L_{g'}],L_{h'}\Bigr] + \rho([L_{(g')^{-1}},L_{g'}])(A_{(h')^{-1}})L_{h'}\\
&\subset A_{(h')^{-1}}\Bigl( \bigl[[L_{(g')^{-1}},L_{h'}],L_{g'}\bigr] + \bigl[[L_{g'},L_{h'}],L_{(g')^{-1}}\bigr] \Bigr)\\
&\hspace{0.4cm} +\rho([L_{(g')^{-1}},L_{g'}])(A_{(h')^{-1}})L_{h'}\\
&\subset \bigl[\rho(L_{(g')^{-1}}),\rho(L_{g'})\bigr](A_{(h')^{-1}})L_{h'}\\
&\subset \Bigl(\rho(L_{(g')^{-1}}) \Bigl(\rho(L_{g'})(A_{(h')^{-1}})\Bigr)\Bigr)L_{h'}\\
&\hspace{0.4cm} + \Bigl(\rho(L_{g'})\Bigl(\rho(L_{(g')^{-1}})(A_{(h')^{-1}})\Bigr)\Bigr)L_{h'}\\
&\subset \Bigl(\rho(L_{(g')^{-1}}) (A_{g'(h')^{-1}})\Bigr)L_{h'} + \Bigl(\rho(L_{g'})(A_{(g')^{-1}(h')^{-1}})\Bigr)L_{h'}
\end{align*}
If $\bigl(\rho(L_{(g')^{-1}}) (A_{g'(h')^{-1}})\bigr)L_{h'}$ or $\bigl(\rho(L_{g'})(A_{(g')^{-1}(h')^{-1}})\bigr)L_{h'}$ is non-zero considering the $\Sigma_G$-connection $\{(g')^{-1}, g'(h')^{-1}\}$ or $\{g',(g')^{-1}(h')^{-1}\}$, respectively, we get $[g]=[h]$.

The proof for the second summand in \eqref{MasSumandos2} is analogous.

For the fourth summand in \eqref{MasSumandos2}, taking now into account the bilinearity of the product and Jacobi identity we obtain
$$\sum\limits_{g' \in [g],h' \in [h]}\bigl[[L_{(g')^{-1}},L_{g'}], [L_{(h')^{-1}},L_{h'}]\bigr] \subset $$
$$ \sum\limits_{g'\in [g],h' \in [h]}\Bigl(\Bigl[L_{(g')^{-1}},\bigl[L_{g'}, [L_{(h')^{-1}},L_{h'}]\bigr]\Bigr] + \Bigl[L_{g'},\bigl[L_{(g')^{-1}}, [L_{(h')^{-1}},L_{h'}]\bigr]\Bigr]\Bigr) \subset $$
$$\sum\limits_{g'\in [g]}\Bigl(\Bigl[L_{(g')^{-1}},\bigl[L_{g'}, L_{[h],1}\bigr]\Bigr] + \Bigl[L_{g'},\bigl[L_{(g')^{-1}}, L_{[h],1}\bigr]\Bigr]\Bigr)$$ and by Equation \eqref{nueve2} we also conclude this fourth summand is zero. That is, Equation \eqref{MasSumandos2} vanishes and then
\begin{equation}\label{nueve3}
[L_{[g],1},L_{[h],1}] =0.    
\end{equation}
In conclusion, from Equations \eqref{cuatro1}-\eqref{nueve3} is $[I_{[g]}, I_{[h]}] = 0.$
\end{proof}

We understand the usual regularity concepts in the graded sense. Given a Lie-Rinehart algebra $(L,A)$, an ideal $I$ of $L$ is a {\em graded ideal} if it splits as $I = \oplus_{g \in G}I_g$, where $I_g := I \cap L_g$, and satisfies $[I_g,I_{g'}] \subset I_{gg'}$, for $g,g'\in G$. We also say that $(L,A)$ is {\it gr-simple} if $[L,L] \neq 0$, $AA \neq 0$, $AL \neq 0$ and its only graded ideals are $\{0\}$, $L$ and ${\rm Ker}\rho$.

\begin{theorem}\label{teo-1}
The following assertions hold.
\begin{enumerate}
\item[{\rm i)}] For all $[g] \in \Sigma_G/ \sim$, the linear space $I_{[g]} = L_{[g],1} \oplus V_{[g]}$ is a graded ideal of $L$.

\item[{\rm ii)}] If $L$ is gr-simple then all the elements of $\Sigma_G$ are $\Sigma_G$-connected. Moreover, $$L_1 = \Bigl(\sum\limits_{g \in \Sigma_G \cap \Lambda_G}A_{g^{-1}}L_g\Bigr) + \Bigl(\sum\limits_{g \in \Sigma_G}[L_{g^{-1}}, L_g]\Bigr).$$
\end{enumerate}
\end{theorem}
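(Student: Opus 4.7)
The plan is to verify that $I_{[g]}$ satisfies every clause of the definition of an ideal in the Lie-Rinehart setting, and then, under gr-simplicity, to extract part (ii) by combining Propositions \ref{pro-2} and \ref{pro-9}.

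For part (i), Proposition \ref{pro-2} already supplies $[I_{[g]},I_{[g]}]\subset I_{[g]}$ and $AI_{[g]}\subset I_{[g]}$, and the decomposition $I_{[g]}=L_{[g],1}\oplus V_{[g]}$, with $L_{[g],1}\subset L_1$ and $V_{[g]}=\bigoplus_{g'\in[g]}L_{g'}$, is already graded. What remains is $[I_{[g]},L]\subset I_{[g]}$ and $\rho(I_{[g]})(A)L\subset I_{[g]}$. Writing $L=L_1\oplus V_{[g]}\oplus\bigoplus_{[h]\neq[g]}V_{[h]}$, brackets against $V_{[g]}$ are handled by Proposition \ref{pro-2}, brackets against $V_{[h]}$ with $[h]\neq[g]$ vanish by Proposition \ref{pro-9}, and against $L_1$ the grading alone gives $[V_{[g]},L_1]\subset V_{[g]}$. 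The remaining inclusion $[L_{[g],1},L_1]\subset L_{[g],1}$ I would check on the two families of generators of $L_{[g],1}$: on commutators $[L_{(g')^{-1}},L_{g'}]$ via Jacobi, and on $A$-products $A_{(g')^{-1}}L_{g'}$ via \eqref{fundamental} together with \eqref{EQ2}, both outputs routing back into $L_{[g],1}$. Finally, rewriting \eqref{fundamental} as $\rho(v)(a)w=[v,aw]-a[v,w]$ yields $\rho(I_{[g]})(A)L\subset[I_{[g]},L]+A[I_{[g]},L]\subset I_{[g]}+AI_{[g]}\subset I_{[g]}$, which closes the anchor clause.

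For part (ii), by (i) each $I_{[g]}$ is a non-zero graded ideal, so gr-simplicity forces $I_{[g]}\in\{L,\mathrm{Ker}\,\rho\}$. Assume two distinct classes $[g]\neq[h]$ exist. If $I_{[g]}=L$, then $V_{[g]}=\bigoplus_{g'\in\Sigma_G}L_{g'}$, forcing $[g]=\Sigma_G$ and hence $[h]\subset[g]$, contradicting $[g]\neq[h]$; symmetrically for $I_{[h]}=L$. The only remaining possibility is $I_{[g]}=I_{[h]}=\mathrm{Ker}\,\rho$, but then the two ideals agree as subsets of $L$, and comparing graded components grade by grade forces $[g]=[h]$. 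Hence $\Sigma_G$ is a single equivalence class, which gives the first assertion. Moreover $I_{[g]}=L_{[g],1}\oplus\bigoplus_{g\in\Sigma_G}L_g$, and in the alternative $I_{[g]}=L$ the grade-$1$ part immediately yields the asserted identity $L_1=L_{[g],1}$.

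The most delicate step is the remaining alternative $I_{[g]}=\mathrm{Ker}\,\rho\subsetneq L$ in the single-class case, where a priori one only obtains $L_{[g],1}=\mathrm{Ker}\,\rho\cap L_1\subseteq L_1$. To force equality I would exploit that in this scenario $\rho$ vanishes on every $L_{g'}$ with $g'\in\Sigma_G$, so $\rho(L)=\rho(L_1)$, together with the gr-simple hypotheses $AL\neq 0$ and $[L,L]\neq 0$: any homogeneous $x\in L_1\setminus L_{[g],1}$ would satisfy $\rho(x)\neq 0$, and using the same closure manipulations as in Proposition \ref{pro-2} one expects such an $x$ to generate a graded ideal sitting strictly between $\mathrm{Ker}\,\rho$ and $L$, contradicting gr-simplicity. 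This, together with the case-by-case computation of $[L_{[g],1},L_1]\subset L_{[g],1}$ on generators in part (i), is where I expect the bulk of the technical work to lie.
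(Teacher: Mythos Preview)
Your approach to part (i) and to the connectedness claim in part (ii) mirrors the paper's proof exactly: the paper decomposes $L = L_1 \oplus (\bigoplus_{g'\in[g]}L_{g'}) \oplus (\bigoplus_{h\notin[g]}L_h)$, invokes Propositions \ref{pro-2} and \ref{pro-9}, asserts $[I_{[g]},L_1]\subset I_{[g]}$ (without the generator-by-generator verification you outline, which is a reasonable addition), and closes the anchor condition via the same rewriting of \eqref{fundamental}. For (ii) the paper runs the identical dichotomy $I_{[g]}\in\{\mathrm{Ker}\,\rho,L\}$ and extracts $[g]=\Sigma_G$ in each branch.

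Where you diverge is in flagging the $L_1$ identity when $I_{[g]}=\mathrm{Ker}\,\rho\subsetneq L$; the paper simply asserts the identity ``in any case'' without further comment, so your instinct that something is being skipped is correct. However, the fix you sketch does not work. If $x\in L_1\setminus L_{[g],1}$ has $\rho(x)\neq 0$, then the graded ideal it generates is nonzero and not contained in $\mathrm{Ker}\,\rho$; gr-simplicity then forces that ideal to be $L$, not something strictly between $\mathrm{Ker}\,\rho$ and $L$, so no contradiction arises. Worse, if the configuration $I_{[g]}=\mathrm{Ker}\,\rho\subsetneq L$ with $[g]=\Sigma_G$ ever occurs, comparing degree-$1$ components gives $L_{[g],1}=(\mathrm{Ker}\,\rho)_1\subsetneq L_1$, so the asserted identity is outright false in that scenario. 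What actually needs to be shown is that this configuration is incompatible with gr-simplicity (together with the standing hypotheses $AL\neq 0$, $[L,L]\neq 0$), and neither your sketch nor the paper's proof supplies such an argument.
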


\begin{proof}
i) We get $[I_{[g]},L_1] \subset I_{[g]}$ and by Propositions \ref{pro-2}-i) and \ref{pro-9} we have $$[I_{[g]}, L] = \Bigl[I_{[g]}, L_1 \oplus \bigl(\bigoplus\limits_{g' \in [g]}L_{g'}\bigr) \oplus \bigl(\bigoplus\limits_{h \notin [g]}L_h\bigr)\Bigr] \subset I_{[g]},$$ so $I_{[g]}$ is a Lie ideal of $L.$ Clearly by Proposition \ref{pro-2}-ii) we also have that  $I_{[g]}$ is an $A$-module. Finally, by Equation \eqref{fundamental} we have
$$\rho(I_{[g]})(A)L \subset [I_{[g]},AL] + A[I_{[g]},L] \subset [I_{[g]},L] + AI_{[g]} \subset I_{[g]}.$$
and we conclude $I_{[g]}$ is an ideal of $L.$ Since by construction $I_{[g]}$ is graded, we obtain the required result.

\medskip

ii) The gr-simplicity of $L$ implies $I_{[g]} \in \{{\rm Ker}\rho,L\}$ for any $g \in \Sigma_G.$ If some $g \in \Sigma_G$ is such that $I_{[g]} = L,$ then $[g] = \Sigma_G.$ Otherwise, if $I_{[g]} = {\rm Ker}\rho$ for all $g \in \Sigma_G$ is $[g] = [h]$ for any $g,h \in \Sigma_G$, and again $[g] = \Sigma_G.$ Therefore in any case $\Sigma_G$ has all its elements $\Sigma_G$-connected and $L_1 = \bigl(\sum_{g \in \Sigma_G \cap \Lambda_G}A_{g^{-1}}L_g\bigr) + \bigl(\sum_{g \in \Sigma_G}[L_{g^{-1}}, L_g]\bigr).$
\end{proof}

\begin{theorem}\label{teo-2}
Let $(L,A)$ be a graded Lie-Rinehart algebra. Then $$L = U + \sum\limits_{[g] \in \Sigma_G/\sim}I_{[g]},$$ where $U$ is a linear complement of $\bigl(\sum_{g \in \Sigma_G \cap \Lambda_G}A_{g^{-1}}L_g\bigr) + \bigl(\sum_{g \in \Sigma_G}[L_{g^{-1}}, L_g]\bigr)$ in $L_1$, and any $I_{[g]} \subset L$ is one of the graded ideals described in Theorem \ref{teo-1}-i). Furthermore, $[I_{[g]}, I_{[h]}] = 0$ when $[g] \neq [h].$
\end{theorem}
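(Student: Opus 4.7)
The proof is essentially bookkeeping using the equivalence relation $\sim$ on $\Sigma_G$ together with the results already established (Theorem 2.8 and Proposition 2.7). The plan is as follows.

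First I would use the $G$-graded decomposition $L = L_1 \oplus \bigl(\bigoplus_{g \in \Sigma_G} L_g\bigr)$ as the starting point. For the homogeneous part of degree different from $1$, I would observe that the partition of $\Sigma_G$ into $\sim$-equivalence classes gives
$$\bigoplus_{g \in \Sigma_G} L_g \;=\; \bigoplus_{[g] \in \Sigma_G/\sim} V_{[g]},$$
since the $V_{[g]}$ are by definition direct sums of the $L_{g'}$ with $g' \in [g]$ and the classes $[g]$ partition $\Sigma_G$.

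Next, for the degree-$1$ part, I would write $L_1 = U + L_1'$, where
$$L_1' \;=\; \Bigl(\sum_{g \in \Sigma_G \cap \Lambda_G} A_{g^{-1}} L_g\Bigr) + \Bigl(\sum_{g \in \Sigma_G} [L_{g^{-1}}, L_g]\Bigr),$$
and $U$ is a chosen linear complement of $L_1'$ in $L_1$. Since both sums are indexed by $\Sigma_G$, which partitions into equivalence classes, and since each generator $A_{g^{-1}}L_g$ or $[L_{g^{-1}},L_g]$ depends only on the pair $(g,g^{-1})$ whose common class is $[g]$, I can regroup the summation to obtain
$$L_1' \;=\; \sum_{[g] \in \Sigma_G/\sim} L_{[g],1}.$$
Putting the two pieces together yields
$$L \;=\; U + \sum_{[g] \in \Sigma_G/\sim} \bigl(L_{[g],1} \oplus V_{[g]}\bigr) \;=\; U + \sum_{[g] \in \Sigma_G/\sim} I_{[g]},$$
and by Theorem \ref{teo-1}-i) each $I_{[g]}$ is a graded ideal of $L$.

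Finally, the orthogonality statement $[I_{[g]}, I_{[h]}] = 0$ for $[g] \neq [h]$ is exactly the content of Proposition \ref{pro-9}, so there is nothing new to prove. The only mild subtlety is the regrouping step for $L_1'$, where one must check that when $g$ runs over $\Sigma_G$ one genuinely recovers exactly $\bigcup_{[g]} ([g] \cap \Lambda_G)$ in the first sum and $\bigcup_{[g]} [g]$ in the second; this is automatic from the fact that $\sim$ partitions $\Sigma_G$. No serious obstacle is expected: this theorem is a clean packaging of Theorem \ref{teo-1} and Proposition \ref{pro-9} together with the observation that the equivalence classes partition $\Sigma_G$.
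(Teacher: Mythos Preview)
Your proposal is correct and follows essentially the same approach as the paper: you unpack the grading $L = L_1 \oplus \bigoplus_{g\in\Sigma_G} L_g$, regroup the non-trivial part by equivalence classes and the $L_1$ part using the complement $U$, then invoke Theorem~\ref{teo-1}-i) and Proposition~\ref{pro-9}. The paper's own proof is in fact more terse than yours, simply declaring the decomposition ``clear'' and citing the same two results.
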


\begin{proof}
We have $I_{[g]}$ is well-defined and, by Theorem \ref{teo-1}-i), an ideal of $L$, being clear that $$L = L_1 \oplus \bigl(\bigoplus\limits_{g \in \Sigma_G}L_g\bigr) = U + \sum\limits_{[g] \in \Sigma_G/\sim}I_{[g]}.$$ Finally, Proposition \ref{pro-9} gives $[I_{[g]}, I_{[h]}]=0$ if $[g] \neq [h].$
\end{proof}

For a Lie-Rinehart algebra $L,$ we denote by ${\mathcal Z}(L) := \bigl\{v \in L : [v, L] = \rho(v) = 0 \bigr\}$ the {\it center} of $L$ as in reference \cite{center}.

\begin{corollary}\label{coro-1}
If ${\mathcal Z}(L) = 0$ and $L_1 = \bigl(\sum_{g \in \Sigma_G \cap \Lambda_G}A_{g^{-1}}L_g\bigr) + \bigl(\sum_{g \in \Sigma_G}[L_{g^{-1}}, L_g]\bigr)$ then $L$ is the direct sum of the graded ideals given in Theorem \ref{teo-1}-i), $$L = \bigoplus\limits_{[g] \in \Sigma_G/\sim}I_{[g]}.$$ Moreover, $[I_{[g]}, I_{[h]}] = 0$ when $[g] \neq [h].$
\end{corollary}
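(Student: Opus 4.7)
The plan is to upgrade the decomposition $L = U + \sum_{[g]} I_{[g]}$ of Theorem \ref{teo-2} to a direct sum by using the two added hypotheses separately: the condition on $L_1$ forces the complement $U$ to vanish, and the centerless assumption forces the ideals $I_{[g]}$ to meet only trivially.

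The first step is essentially a citation of Theorem \ref{teo-2}. Since the hypothesis states that $L_1$ is precisely the subspace that $U$ is chosen to complement in that theorem, one has $U = 0$, and the theorem then gives $L = \sum_{[g] \in \Sigma_G/\sim} I_{[g]}$. The ``moreover'' clause $[I_{[g]}, I_{[h]}] = 0$ for $[g] \neq [h]$ is Proposition \ref{pro-9} verbatim.

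Next, to see the sum is direct, I would take $x \in I_{[g]} \cap \sum_{[h] \neq [g]} I_{[h]}$ and decompose each summand via $I_{[k]} = L_{[k],1} \oplus V_{[k]}$. Because the equivalence classes partition $\Sigma_G$, the spaces $V_{[k]}$ have pairwise disjoint support in the grading, so the $V$-components on both sides must vanish; this reduces to $x \in L_{[g],1} \cap \sum_{[h] \neq [g]} L_{[h],1} \subset L_1$. I would then show this forces $x \in \mathcal{Z}(L)$, which by hypothesis is zero. For the bracket condition $[x, L] = 0$: since $L = \sum_{[k]} I_{[k]}$, for each $[k] \neq [g]$ I use $x \in I_{[g]}$ together with Proposition \ref{pro-9}, while for $[k] = [g]$ I use the alternative expression $x = \sum_{[h] \neq [g]} x^{[h]}$ with $x^{[h]} \in I_{[h]}$ and Proposition \ref{pro-9} once more. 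For the anchor condition $\rho(x) = 0$: substituting into \eqref{fundamental} gives $0 = [x, aw] = a[x,w] + \rho(x)(a) w = \rho(x)(a) w$ for all $a \in A$ and $w \in L$, from which one aims at $\rho(x) = 0$.

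The main obstacle I anticipate is this last step: deducing $\rho(x) = 0$ as a derivation on $A$ from $\rho(x)(A) \cdot L = 0$. This is immediate under any faithfulness of the $A$-action (e.g.\ trivial annihilator of $L$ in $A$), but in its absence one must either regard such faithfulness as implicit in the hypotheses or argue more finely by exploiting that $x$ lies simultaneously in $I_{[g]}$ and in $\sum_{[h] \neq [g]} I_{[h]}$ and using the derivation property of $\rho(x)$ against the homogeneous components of $A$.
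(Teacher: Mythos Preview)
Your approach is essentially the paper's own. The paper also starts from Theorem~\ref{teo-2} with $U=0$, picks $v\in I_{[g]}\cap\sum_{[h]\neq[g]}I_{[h]}$, uses Proposition~\ref{pro-9} twice (once from each membership) to get $[v,L]=0$, and then invokes Equation~\eqref{fundamental} to conclude $\rho(v)=0$, hence $v\in\mathcal{Z}(L)=0$.

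Two remarks. First, your intermediate reduction of $x$ to $L_1$ via the disjointness of the supports of the $V_{[k]}$ is correct but unnecessary: the orthogonality argument from Proposition~\ref{pro-9} already kills $[x,L]$ without knowing where $x$ sits in the grading, so the paper skips this step entirely. Second, the obstacle you flag---passing from $\rho(x)(A)\,L=0$ to $\rho(x)=0$---is precisely the step the paper dispatches in one line (``Equation~\eqref{fundamental} allows us to conclude $\rho(v)=0$''). So you have not introduced a new gap; you have simply been more scrupulous than the authors about an implicit faithfulness assumption on the $A$-action. In the paper's later ``tight'' setting this is not addressed either, so for the purposes of matching the paper you may simply assert it as they do.
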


\begin{proof}
Since $L_1 = \bigl(\sum_{g \in \Sigma_G \cap \Lambda_G}A_{g^{-1}}L_g\bigr) + \bigl(\sum_{g \in \Sigma_G}[L_{g^{-1}}, L_g]\bigr)$ we get $$L = \sum\limits_{[g] \in \Sigma_G/\sim} I_{[g]}.$$
To verify the direct character of the sum, take some $v \in I_{[g]} \cap \bigl(\sum_{[h]\in \Sigma_G/\sim, [h] \neq [g]}I_{[h]}\bigr)$. Since $v \in I_{[g]},$ the fact $\bigl[I_{[g]},I_{[h]}\bigr] = 0$ when $[g] \neq [h]$ gives us $$\Bigl[v,\sum_{[h] \in \Sigma_G/ \sim, [h] \neq [g]}I_{[h]}\Bigr] = 0.$$

\noindent Similarly, since $v \in \sum_{[h]\in \Sigma_G/ \sim, [h] \neq [g]}I_{[h]}$ we get $[v,I_{[g]}] = 0.$ Therefore $[v,L]=0.$ Now, Equation \eqref{fundamental} allows us to conclude $\rho(v) = 0$. That is, $v \in {\mathcal Z}(L)=0$.
\end{proof}

\section{Connections in the $G$-support of $A$. Decompositions of $A$}

In this section we introduce an adequate notion of connection among the elements of the $G$-support $\Lambda_G$ for a commutative and associative $\mathbb{F}$-algebra $A$ associated with a $G$-graded Lie-Rinehart $\mathbb{F}$-algebra $L$ (see Definition \ref{DEF 1.5}). We recall that $A$ admits a group graduation as $$A=A_1 \oplus \Bigl(\bigoplus_{g \in \Lambda_G}A_g\Bigr),$$ being $\Lambda_G=\{g \in G\setminus \{1\}: A_g \neq 0\}$. As in the previous section, we will continue considering the sets $\Sigma_G^{\pm}$, $\Lambda_G^{\pm}$.

\begin{definition}\label{connection2}\rm
Let $g,g' \in \Lambda_G$. We say that $g$ is {\it $\Lambda_G$-connected} to $g'$ if there exists $\{k_1,k_2,\dots,k_n\} \subset \Lambda_G^{\pm} \cup \Sigma_G^{\pm}$ such that
\begin{enumerate}
\item[i.] $k_1 = g$.
\item[ii.] $\{k_1,k_1k_2,\dots,k_1k_2\cdots k_{n-1}\} \subset \Lambda_G^{\pm} \cup \Sigma_G^{\pm}.$
\item[iii.] $k_1k_2 \cdots k_n \in \{g',(g')^{-1}\}.$
\end{enumerate}

\noindent We also say that $\{k_1,\dots,k_n\}$ is a $\Lambda_G$-{\it connection} from $g$ to $g'$.
\end{definition}

\noindent As in Section 2 we can prove the next results.

\begin{proposition}\label{pro-1}
The relation $\approx$ in $\Lambda_G$, defined by $g \approx g'$ if and only if $g$ is $\Lambda_G$-connected to $g'$, is an equivalence relation.
\end{proposition}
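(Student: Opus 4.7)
The plan is to mirror the proof of Proposition \ref{pro1} almost verbatim, replacing $\Sigma_G$-connections by $\Lambda_G$-connections and $\sim$ by $\approx$. The only formal difference is that condition ii of Definition \ref{connection2} requires the partial products $k_1, k_1k_2, \dots, k_1\cdots k_{n-1}$ to lie in $\Lambda_G^{\pm} \cup \Sigma_G^{\pm}$ instead of just $\Sigma_G^{\pm}$, which is a weaker requirement; hence the same constructions used in Proposition \ref{pro1} will automatically satisfy it.

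First I would verify reflexivity: for $g \in \Lambda_G$ the singleton $\{g\}$ is a $\Lambda_G$-connection from $g$ to itself, since conditions i and ii are trivial (there are no partial products when $n=1$) and $k_1 = g \in \{g, g^{-1}\}$ gives iii.

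For symmetry, suppose $g \approx g'$ via a $\Lambda_G$-connection $\{k_1, \dots, k_n\}$. I would split into two cases. If $k_1 \cdots k_n = g'$, then
\[
\{k_1 \cdots k_n,\; k_n^{-1},\; k_{n-1}^{-1},\; \dots,\; k_2^{-1}\} \subset \Lambda_G^{\pm} \cup \Sigma_G^{\pm}
\]
is a $\Lambda_G$-connection from $g'$ to $g$: its partial products are exactly $g', k_1 \cdots k_{n-1}, \dots, k_1$, each of which lies in $\Lambda_G^{\pm} \cup \Sigma_G^{\pm}$ by the hypothesis on the original connection, and the full product is $k_1 = g$. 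If instead $k_1 \cdots k_n = (g')^{-1}$, the sequence $\{k_1^{-1}\cdots k_n^{-1},\; k_n,\; k_{n-1},\; \dots,\; k_2\}$ works by the same check. Thus $g' \approx g$.

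For transitivity, given $\{k_1, \dots, k_n\}$ from $g$ to $g'$ and $\{k'_1, \dots, k'_m\}$ from $g'$ to $g''$, I would concatenate them. If $m > 1$, then $\{k_1, \dots, k_n, k'_2, \dots, k'_m\}$ is a $\Lambda_G$-connection from $g$ to $g''$ when $k_1 \cdots k_n = g'$, while $\{k_1, \dots, k_n, (k'_2)^{-1}, \dots, (k'_m)^{-1}\}$ works when $k_1 \cdots k_n = (g')^{-1}$; in each case, one checks that the new partial products are either partial products of one of the two original connections or equal to $g'$ (respectively $(g')^{-1}$), all of which lie in $\Lambda_G^{\pm} \cup \Sigma_G^{\pm}$. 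If $m = 1$ then $g'' \in \{g', (g')^{-1}\}$ and the original $\{k_1, \dots, k_n\}$ already serves. I do not anticipate any genuine obstacle: this is a routine transcription of the argument for Proposition \ref{pro1}, and the only point requiring attention is the bookkeeping that all new partial products still lie in the prescribed set, which is immediate.
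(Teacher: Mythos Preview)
Your proposal is correct and is precisely the approach the paper intends: the paper's own proof of this proposition consists only of the remark ``As in Section 2 we can prove the next results,'' i.e.\ a direct transcription of the argument for Proposition~\ref{pro1}, which is exactly what you carry out.
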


\begin{remark}\label{Remark2}
Let $g,g' \in \Lambda_G$ such that $g \approx g'.$ If $g'h \in \Lambda_G,$ for $h \in \Lambda_G,$ then $g \approx g'h$. 
\end{remark}

\noindent By Proposition \ref{pro-1} we can consider the quotient set $$\Lambda_G/ \approx := \{[g]: g \in \Lambda_G\},$$ becoming $[g]$ the set of elements of $\Lambda_G$ which are $\Lambda_G$-connected to $g$. Our next goal in this section is to associate an (adequate) ideal $\mathcal{A}_{[g]}$ of the algebra $A$ to any $[g] \in \Lambda_G/ \approx$. Fix $g \in \Lambda_G$, we start by defining the sets $$A_{[g],1} := \Bigl(\sum_{g' \in [g] \cap  \Sigma_G}\rho(L_{(g')^{-1}})(A_{g'})\Bigr) + \Bigl(\sum_{g' \in [g]}A_{(g')^{-1}}A_{g'}\Bigr) \subset A_1$$ and $$A_{[g]} := \bigoplus \limits_{g' \in [g]} A_{g'}.$$
Hence, we denote by $\mathcal{A}_{[g]}$ the direct sum of the two graded subspaces above. That is, $$\mathcal{A}_{[g]} := A_{[g],1} \oplus A_{[g]}.$$

\begin{proposition}\label{pro2}
For any $[g] \in \Lambda_G/ \approx$ we have $\mathcal{A}_{[g]}\mathcal{A}_{[g]} \subset \mathcal{A}_{[g]}$.
\end{proposition}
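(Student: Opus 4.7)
The plan is to decompose $\mathcal{A}_{[g]}\mathcal{A}_{[g]}$ into the four summands
\[
A_{[g],1}A_{[g],1}\ +\ A_{[g],1}A_{[g]}\ +\ A_{[g]}A_{[g],1}\ +\ A_{[g]}A_{[g]}
\]
and verify that each lies in $\mathcal{A}_{[g]}$. The last summand is immediate from the grading: for $g',g''\in[g]$ one has $A_{g'}A_{g''}\subset A_{g'g''}$, which is zero unless $g'g''=1$ (in which case it lands in $A_{(g')^{-1}}A_{g'}\subset A_{[g],1}$) or $g'g''\in\Lambda_G$, where Remark \ref{Remark2} then yields $g'g''\in[g]$ and hence $A_{g'g''}\subset A_{[g]}$. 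The two mixed summands are easier still: $A_{[g],1}\subset A_1$, and $A_1 A_{g'}\subset A_{g'}\subset A_{[g]}$ for any $g'\in[g]$.

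The real work lies in the $A_{[g],1}A_{[g],1}$ summand, whose two types of generators give four sub-cases. For a product of two derivation-type generators I would pick $v\in L_{(g')^{-1}}$, $a\in A_{g'}$, $v'\in L_{(g'')^{-1}}$, $a'\in A_{g''}$ with $g',g''\in[g]\cap\Sigma_G$, and use Leibniz's rule for the derivation $\rho(v')$ to write
\[
\rho(v)(a)\,\rho(v')(a')\;=\;\rho(v')\bigl(\rho(v)(a)\,a'\bigr)\;-\;\rho(v')(\rho(v)(a))\,a'.
\]
The first summand lies in $\rho(L_{(g'')^{-1}})(A_1A_{g''})\subset\rho(L_{(g'')^{-1}})(A_{g''})\subset A_{[g],1}$, while by Equation \eqref{EQ2} the factor $\rho(v')(\rho(v)(a))$ lies in $A_{(g'')^{-1}}$, placing the second summand in $A_{(g'')^{-1}}A_{g''}\subset A_{[g],1}$. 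The mixed product $\rho(v)(a)\cdot bd$ (with $g'\in[g]\cap\Sigma_G$, $g''\in[g]$, $b\in A_{(g'')^{-1}}$, $d\in A_{g''}$) is handled by the same Leibniz trick applied to $\rho(v)(a\cdot bd)$, the two resulting terms landing in $\rho(L_{(g')^{-1}})(A_{g'})$ and $A_{g'}A_{(g')^{-1}}$ respectively; the symmetric mixed product is analogous.

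For the remaining case $A_{(g')^{-1}}A_{g'}\cdot A_{(g'')^{-1}}A_{g''}$ with $g',g''\in[g]$, a simple rearrangement using commutativity and associativity of $A$ suffices: for homogeneous $a\in A_{(g')^{-1}}$, $b\in A_{g'}$, $c\in A_{(g'')^{-1}}$, $d\in A_{g''}$ one writes $abcd=(abc)\,d$ with $abc\in A_{(g')^{-1}g'(g'')^{-1}}=A_{(g'')^{-1}}$, whence $abcd\in A_{(g'')^{-1}}A_{g''}\subset A_{[g],1}$. The main obstacle is thus the first Leibniz sub-case above: two factors lying a priori only in $A_1$ must be exhibited inside the distinguished subspace $A_{[g],1}$, which cannot be achieved by grading alone and forces the systematic use of Leibniz together with Equation \eqref{EQ2} to convert each product into one of the two prescribed generator forms.
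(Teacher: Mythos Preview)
Your proof is correct and follows the same overall decomposition as the paper. The only notable difference is in handling the derivation-type factors inside $A_{[g],1}A_{[g],1}$: you invoke Leibniz's rule to split $\rho(v)(a)\,\rho(v')(a')$ and the mixed products, whereas the paper simply observes that $\rho(L_{(g')^{-1}})(A_{g'})\subset A_1$ by Equation~\eqref{EQ2} and then uses $A_1\cdot A_{(g'')^{-1}}A_{g''}\subset A_{(g'')^{-1}}A_{g''}$ (associativity) and $A_1\cdot\rho(L_{(g'')^{-1}})(A_{g''})\subset\rho(A_1L_{(g'')^{-1}})(A_{g''})\subset\rho(L_{(g'')^{-1}})(A_{g''})$ (the $A$-module property of $\rho$). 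This sidesteps all Leibniz manipulations and makes the ``real work'' you identify essentially trivial. Conversely, your treatment of the pure case $(A_{(g')^{-1}}A_{g'})(A_{(g'')^{-1}}A_{g''})$ via the single rearrangement $(abc)d\in A_{(g'')^{-1}}A_{g''}$ is slightly slicker than the paper's, which splits into the two sub-cases $g''\neq(g')^{-1}$ and $g''=(g')^{-1}$.
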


\begin{proof}
Since the algebra $A$ is commutative we have
\begin{equation}\label{cero}
\mathcal{A}_{[g]}\mathcal{A}_{[g]} = \Bigl(A_{[g],1} \oplus A_{[g]}\Bigr)\Bigl(A_{[g],1} \oplus A_{[g]}\Bigl) \subset A_{[g],1}A_{[g],1} + A_{[g],1}A_{[g]} + A_{[g]}A_{[g]}.
\end{equation}

Let us consider the second summand in Equation (\ref{cero}). Given $g' \in [g]$ we have $A_{[g],1}A_{g'} \subset A_1A_{g'}\subset A_{g'}$, and therefore
\begin{equation}\label{ceroo}
A_{[g],1}A_{g'} \subset A_{[g]}.
\end{equation}

For the third summand in \eqref{cero}, let be $g', g'' \in [g]$ such that $0 \neq A_{g'}A_{g''} \subset A_{g'g''}.$ If $g'g'' = 1$ we have $A_{(g')^{-1}}A_{g'} \subset A_1,$ and so $A_{(g')^{-1}}A_{g'} \subset A_{[g],1}.$ Suppose $g'g'' \in \Lambda_G$, then by Remark \ref{Remark2} we have $g'g'' \in [g]$ and so $A_{g'}A_{g''} \subset A_{g'g''} \subset A_{[g]}$. Hence $A_{[g]}A_{[g]} = (\bigoplus_{g' \in [g]}A_{g'})(\bigoplus_{g'' \in [g]}A_{g''}) \subset A_{[g],1} \oplus A_{[g]}$. That is,
\begin{equation}\label{eq0.5}
A_{[g]}A_{[g]} \subset \mathcal{A}_{[g]}.
\end{equation}

Finally we consider the first summand $A_{[g],1}A_{[g],1}$ in \eqref{cero} and suppose there exist $g', g'' \in [g] \cap \Sigma_G$ such that $$\Bigl(\rho(L_{(g')^{-1}})(A_{g'}) + A_{(g')^{-1}}A_{g'}\Bigr)\Bigl(\rho(L_{(g'')^{-1}})(A_{g''}) + A_{(g'')^{-1}}A_{g''}\Bigr) \neq 0,$$ so
\begin{eqnarray}
&& \rho(L_{(g')^{-1}})(A_{g'})\rho(L_{(g'')^{-1}})(A_{g''}) + \rho(L_{(g')^{-1}})(A_{g'})(A_{(g'')^{-1}}A_{g''}) \nonumber \\
&& \quad \quad + (A_{(g')^{-1}}A_{g'})\rho(L_{(g'')^{-1}})(A_{g''})
+ (A_{(g')^{-1}}A_{g'})(A_{(g'')^{-1}}A_{g''}) \neq 0 \label{ideal_A}
\end{eqnarray}
For the last summand in Equation \eqref{ideal_A}, in case $g'' \neq (g')^{-1},$ by the commutativity and associativity of $A$ we have
$$(A_{(g')^{-1}}A_{g'})(A_{(g'')^{-1}}A_{g''}) = (A_{(g')^{-1}}A_{(g'')^{-1}})(A_{g'}A_{g''}) \subset A_{(g'g'')^{-1}}A_{g'g''}\subset A_{[g],1}$$
because by Remark \ref{Remark2} we get $g'g''\in [g]$. In case $g'' = (g')^{-1},$ it follows
$$(A_{(g')^{-1}}A_{g'})(A_{g'}A_{(g')^{-1}}) = A_{(g')^{-1}}(A_{g'}A_{g'}A_{(g')^{-1}}) \subset A_{(g')^{-1}}A_{g'} \subset A_{[g],1}.$$
For the second summand in Equation \eqref{ideal_A} using Equation \eqref{EQ2} we get
$$\Bigl(\rho(L_{(g')^{-1}})(A_{g'})\Bigr)\Bigl(A_{(g'')^{-1}}A_{g''}\Bigr) \subset A_1\bigl(A_{(g'')^{-1}}A_{g''}\bigr) \subset A_{(g'')^{-1}}A_{g''} \subset A_{[g],1}$$
Similarly, can be proven the third summand in Equation \eqref{ideal_A}.

Finally, for the first summand in \eqref{ideal_A}, as with the second summand, by Equation \eqref{EQ2} we have
$$\bigl(\rho(L_{(g')^{-1}}(A_{g'})\bigr)\bigl(\rho(L_{(g'')^{-1}})(A_{g''})\bigr) \subset A_1\bigl(\rho(L_{(g'')^{-1}})(A_{g''})\bigr) \subset A_{[g],1}.$$

\noindent That is, with Equation \eqref{ideal_A} we have shown
\begin{equation}\label{eq0.6}
A_{[g],1}A_{[g],1} \subset A_{[g],1}.
\end{equation}

\noindent From Equations \eqref{cero}-\eqref{eq0.5} and \eqref{eq0.6} we get $\mathcal{A}_{[g]}\mathcal{A}_{[g]} \subset \mathcal{A}_{[g]}.$
\end{proof}

\begin{proposition}\label{pro9}
For any $[g], [h] \in \Lambda_G/ \approx$ such that $[g] \neq [h]$ we have $\mathcal{A}_{[g]}\mathcal{A}_{[h]}=0$.
\end{proposition}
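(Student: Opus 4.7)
The plan is to expand
$$\mathcal{A}_{[g]}\mathcal{A}_{[h]} \subset A_{[g],1}A_{[h],1} \;+\; A_{[g],1}A_{[h]} \;+\; A_{[g]}A_{[h],1} \;+\; A_{[g]}A_{[h]}$$
and show that all four summands vanish, under the standing assumption $[g]\neq [h]$. Throughout, I fix representatives $g'\in[g]$ and $h'\in[h]$; the argument is always by exhibiting a $\Lambda_G$-connection that would force $g'\approx h'$, and hence $[g]=[h]$, a contradiction.

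First I would handle the pure graded piece $A_{[g]}A_{[h]}$. Assuming $A_{g'}A_{h'}\neq 0$, in particular $A_{g'h'}\neq 0$. If $g'h'=1$, then $h'=(g')^{-1}$ and the one-element connection $\{g'\}$ gives $g'\approx h'$. If instead $g'h'\in\Lambda_G$, then by Remark~\ref{Remark2} both $g'\approx g'h'$ and (via $\{h',g'\}$) $h'\approx g'h'$. Either way $g'\approx h'$, contradiction. Exactly the same connection trick yields two corollaries I will need below: $A_{(g')^{-1}h'}=0$ (use the connections $\{(g')^{-1}h',(h')^{-1}\}$ to $g'$ and $\{(g')^{-1}h',g'\}$ to $h'$), and $L_{(g')^{-1}(h')^{-1}}=0$ (if $(g')^{-1}(h')^{-1}\in\Sigma_G$, the connection $\{g',(g')^{-1}(h')^{-1}\}$ runs from $g'$ to $h'$). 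By Equation~\eqref{EQ2} the first corollary gives $\rho(L_{(g')^{-1}})(A_{h'})=\rho(L_{(h')^{-1}})(A_{g'})=0$; the second, combined with $\rho$ being a Lie homomorphism, gives $[\rho(v),\rho(w)]=\rho([v,w])=0$ for all $v\in L_{(g')^{-1}}$, $w\in L_{(h')^{-1}}$.

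With these in hand, the two cross summands $A_{[g],1}A_{[h]}$ and $A_{[g]}A_{[h],1}$ are short. A representative term of the first type is $\rho(L_{(g')^{-1}})(A_{g'})\cdot A_{h'}$ (for $g'\in[g]\cap\Sigma_G$); applying Leibniz to $\rho(L_{(g')^{-1}})(A_{g'}A_{h'})$ and invoking both $A_{g'}A_{h'}=0$ and $\rho(L_{(g')^{-1}})(A_{h'})=0$ forces the term to zero. The remaining summands $A_{(g')^{-1}}A_{g'}\cdot A_{h'}$ are associative rearrangements of $A_{[g]}A_{[h]}$, hence already zero; the case $A_{[g]}A_{[h],1}=0$ is symmetric.

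The main obstacle is $A_{[g],1}A_{[h],1}$, specifically the cross term $\rho(L_{(g')^{-1}})(A_{g'})\cdot\rho(L_{(h')^{-1}})(A_{h'})$, whose two factors live in $A_1$ so that no grade obstruction is immediate. For $v\in L_{(g')^{-1}}$, $a\in A_{g'}$, $w\in L_{(h')^{-1}}$, $b\in A_{h'}$, Leibniz's law rewrites
$$\rho(v)(a)\cdot\rho(w)(b) \;=\; \rho(v)\bigl(a\cdot\rho(w)(b)\bigr) \;-\; a\cdot\rho(v)\bigl(\rho(w)(b)\bigr).$$
A second Leibniz applied to $\rho(w)(ab)=0$, together with $\rho(w)(a)=0$, shows $a\cdot\rho(w)(b)=0$, killing the first term. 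For the second, $[\rho(v),\rho(w)]=0$ combined with $\rho(v)(b)=0$ yields $\rho(v)(\rho(w)(b))=\rho(w)(\rho(v)(b))=0$. The residual summands $\rho(L_{(g')^{-1}})(A_{g'})\cdot A_{(h')^{-1}}A_{h'}$ and $A_{(g')^{-1}}A_{g'}\cdot A_{(h')^{-1}}A_{h'}$ collapse, by associative and commutative regrouping, to products of the form $\rho(L_{(g')^{-1}})(A_{g'})\cdot A_{h'}$ times $A_{(h')^{-1}}$ and $A_{g'}A_{h'}$ times $A_{(g')^{-1}}A_{(h')^{-1}}$ respectively, both already shown to vanish in the previous step. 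This closes all four cases and delivers $\mathcal{A}_{[g]}\mathcal{A}_{[h]}=0$.
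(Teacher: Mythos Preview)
Your proof is correct and follows essentially the same route as the paper's: decompose $\mathcal{A}_{[g]}\mathcal{A}_{[h]}$ into the four summands $A_{[g],1}A_{[h],1}+A_{[g],1}A_{[h]}+A_{[g]}A_{[h],1}+A_{[g]}A_{[h]}$, and kill each one via Leibniz's rule, the Lie-homomorphism property of $\rho$, and $\Lambda_G$-connection arguments forcing $[g]=[h]$. The only difference is organizational: you front-load the key vanishing facts ($A_{g'}A_{h'}=0$, $A_{(g')^{-1}h'}=0$, $\rho(L_{(g')^{-1}})(A_{h'})=0$, $[L_{(g')^{-1}},L_{(h')^{-1}}]=0$) and then invoke them, whereas the paper derives the corresponding connection contradictions inline at each step. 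One small point worth making explicit: when you pass from ``$A_{(g')^{-1}h'}=0$'' to ``$\rho(L_{(h')^{-1}})(A_{g'})=0$'' you are implicitly using the companion fact $A_{g'(h')^{-1}}=0$, which follows by the same connection argument with the roles of $g'$ and $h'$ exchanged.
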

\begin{proof}
We have
\begin{equation}\label{cuatro}
\Bigl(A_{[g],1} \oplus A_{[g]}\Bigr)\Bigl(A_{[h],1} \oplus A_{[h]}\Bigr) \subset A_{[g],1}A_{[h],1} + A_{[g],1}A_{[h]} + A_{[g]}A_{[h],1} + A_{[g]}A_{[h]}.
\end{equation}

Consider the fourth summand $A_{[g]}A_{[h]}$ and suppose there exist $g' \in [g],$ $h' \in [h]$ such that $A_{g'}A_{h'} \neq 0$, so $A_{g'h'} \neq 0$. Observe that necessarily $h' \neq (g')^{-1}$, then $g'h' \in \Lambda_G$. By Remark \ref{Remark2} we obtain $g' \approx g'h'$, meaning that $g'h'\in [g]$. Similarly, $g'h' \in [h]$, so $[g] = [h]$, a contradiction. Hence $A_{g'}A_{h'} = 0$ and so
\begin{equation}\label{nueve}
A_{[g]}A_{[h]} = 0.
\end{equation}

Consider now the second summand $A_{[g],1}A_{[h]}$ in Equation \eqref{cuatro}. We take $g' \in [g] \cap \Sigma_G$ and $h' \in [h]$ such that $$\Bigl(\rho(L_{(g')^{-1}})(A_{g'}) + A_{(g')^{-1}}A_{g'}\Bigr)A_{h'} \neq 0.$$

Suppose $(A_{(g')^{-1}}A_{g'})A_{h'} \neq 0.$ By using associativity of $A$ we get $A_{(g')^{-1}}(A_{g'}A_{h'}) \neq 0,$ so $A_{g'h'} \neq 0$ and then $g'h' \in \Lambda_G$. Arguing as above $g \approx h$, a contradiction. If the another summand $\rho(L_{(g')^{-1}})(A_{g'})A_{h'} \neq 0$, since $\rho(L_{(g')^{-1}})$ is a derivation then $\rho(L_{(g')^{-1}})(A_{g'}A_{h'})$ or $A_{g'}\rho(L_{(g')^{-1}})(A_{h'})$ is non-zero, but in any case we argue similarly as above to get $g \approx h,$ a contradiction. From here
\begin{equation}\label{lex}
A_{[g],1}A_{[h]} = 0.
\end{equation}
By commutativity, for the third summand also $A_{[g]}A_{[h],1} = 0.$

Finally, let us prove $A_{[g],1}A_{[h],1} = 0$. Suppose there exist $g' \in [g] \cap \Sigma_G, h' \in [h] \cap \Sigma_G$ such that
$$\rho(L_{(g')^{-1}})(A_{g'})\rho(L_{(h')^{-1}})(A_{h'}) + \rho(L_{(g')^{-1}})(A_{g'})(A_{(h')^{-1}}A_{h'})$$
\begin{equation}\label{ideal_AA}
+ (A_{(g')^{-1}}A_{g'})\rho(L_{(h')^{-1}})(A_{h'}) + (A_{(g')^{-1}}A_{g'})(A_{(h')^{-1}}A_{h'}) \neq 0.
\end{equation}

If the last summand in Equation \eqref{ideal_AA} is non-zero, by the commutativity and associativity of $A$, since $h' \neq (g')^{-1}$, we have
$$(A_{(g')^{-1}}A_{g'})(A_{(h')^{-1}}A_{h'}) = (A_{(g')^{-1}}A_{(h')^{-1}})(A_{g'}A_{h'}) \subset A_{(g'h')^{-1}}A_{g'h'}$$
and by Remark \ref{Remark2} we get $g'h'\in [g]$ as well $g'h'\in [h]$, a contradiction.

If for the second summand in Equation \eqref{ideal_AA} there exist $g' \in [g]\cap \Sigma_G, h'\in [h]$, since $\rho(L_{(g')^{-1}})$ is a derivation we get
\begin{align*}
\Bigl(\rho(L_{(g')^{-1}})(A_{g'}&)\Bigr)\Bigl(A_{(h')^{-1}}A_{h'}\Bigr) \subset
\Bigl(\rho(L_{(g')^{-1}})(A_{g'}) A_{(h')^{-1}}\Bigr) A_{h'}\\
&\subset
 \rho(L_{(g')^{-1}})\Bigl(A_{g'}A_{(h')^{-1}}\Bigr)A_{h'} + \Bigl(A_{g'}\rho(L_{(g')^{-1}})(A_{(h')^{-1}})\Bigr) A_{h'}
\end{align*}
If the first summand or the second one is nonzero we get as in the previous cases that $[g]=[h]$, a contradiction.
Similarly, can be proven the third summand in Equation \eqref{ideal_AA}.

Finally, if for the first summand in \eqref{ideal_AA} there exist $g' \in [g]\cap \Sigma_G,$ $h'\in [h]\cap \Sigma_G$ we have
\begin{align*}
\Bigl(\rho(L_{(g')^{-1}})(A_{g'})\Bigr)&\Bigl(\rho(L_{(h')^{-1}})(A_{h'})\Bigr)\subset  \rho(L_{{(g')^{-1}}})\Bigl((A_{g'})\rho(L_{{(h')^{-1}}})(A_{h'})\Bigr)\\
&+ A_{g'}\rho(L_{{(g')^{-1}}})\Bigl(\rho(L_{{(h')^{-1}}})(A_{h'})\Bigr)\\
&\subset \rho(L_{{(g')^{-1}}})\Bigl(\rho(L_{{(h')^{-1}}})(A_{g'}A_{h'}) + \rho(L_{{(h')^{-1}}})(A_{g'})(A_{h'})\Bigr)\\
&+ A_{g'}\rho(L_{{(h')^{-1}}})\Bigl(\rho(L_{{(g')^{-1}}})(A_{h'})\Bigr) + A_{g'}\rho\Bigl([L_{{(h')^{-1}}},L_{{(g')^{-1}}}]\Bigr)A_{h'}
\end{align*}
and if some summand is non-zero arguing as above we obtain again the contradiction $[g]=[h]$.

Since Equation \eqref{ideal_AA} vanishes we assert
\begin{equation}\label{tex}
A_{[g],1}A_{[h],1} = 0,
\end{equation}
and from \eqref{cuatro}-\eqref{lex} and \eqref{tex} we conclude $\mathcal{A}_{[g]}\mathcal{A}_{[h]} = 0$.
\end{proof}

We recall that a subspace $I$ of a commutative and associative algebra $A$ is an {\it ideal of} $A$ if $AI \subset I$. In case $A$ is a $G$-graded algebra, we say an ideal $I \subset A$ is {\em graded} if it splits as $I = \oplus_{g \in G}I_g$, where $I_g := I \cap A_g$, and satisfies $I_gI_{g'} \subset I_{gg'}$, for $g,g'\in G$. We say that $A$ is {\it gr-simple} if $AA \neq 0$ and it contains no proper graded ideals.

\begin{theorem}\label{teo-11}
Let $A$ be a commutative and associative $\mathbb{F}$-algebra associated to a $G$-graded Lie-Rinehart $\mathbb{F}$-algebra $L.$ Then the following assertions hold.
\begin{enumerate}
\item[{\rm i)}] For any $[g] \in \Lambda_G/\approx$, the linear space $$\mathcal{A}_{[g]} = A_{[g],1} \oplus A_{[g]}$$ is a graded ideal of $A$.

\item[{\rm ii)}] If $A$ is gr-simple then all elements of $\Lambda_G$ are $\Lambda_G$-connected. Furthermore, $$A_1 = \sum\limits_{g \in \Lambda_G \cap \Sigma_G}\rho(L_{g^{-1}})(A_g) + \sum\limits_{g \in \Lambda_G}A_{g^{-1}}A_g$$
\end{enumerate}
\end{theorem}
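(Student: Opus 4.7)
The plan is to follow the blueprint of Theorem~\ref{teo-1}, with Propositions~\ref{pro2} and~\ref{pro9} now playing the roles that Propositions~\ref{pro-2} and~\ref{pro-9} played before. Since $\mathcal{A}_{[g]}$ is a sum of graded pieces of $A$ by construction, for i) it suffices to check that $A\,\mathcal{A}_{[g]}\subset \mathcal{A}_{[g]}$.

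First I split $A = A_1 \oplus \bigl(\bigoplus_{k \in \Lambda_G} A_k\bigr)$ and handle $A_k\,\mathcal{A}_{[g]}$ for $k \in \Lambda_G$ by cases: if $k \in [g]$, then $A_k \subset \mathcal{A}_{[g]}$ and the product is absorbed by Proposition~\ref{pro2}; if $k \notin [g]$, then $k$ belongs to some other class $[h]$, and Proposition~\ref{pro9} forces $A_k\,\mathcal{A}_{[g]} \subset \mathcal{A}_{[h]}\,\mathcal{A}_{[g]} = 0$. This reduces the problem to showing $A_1\,\mathcal{A}_{[g]} \subset \mathcal{A}_{[g]}$. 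The pieces $A_1\cdot A_{g'}$ for $g' \in [g]$ and $A_1\cdot(A_{(g')^{-1}}A_{g'})$ for $g' \in [g]$ land inside $\mathcal{A}_{[g]}$ immediately by homogeneity and associativity, using only $A_1 A_k \subset A_k$ for every $k \in G$.

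The one delicate point, and what I expect to be the main obstacle, is controlling $A_1 \cdot \rho(L_{(g')^{-1}})(A_{g'})$ for $g' \in [g]\cap \Sigma_G$. My plan is to exploit that $\rho(L_{(g')^{-1}})$ is a derivation of $A$, so that for $s \in A_1$ and $a \in A_{g'}$ one has
$$s\,\rho(L_{(g')^{-1}})(a) \;=\; \rho(L_{(g')^{-1}})(sa)\;-\;\rho(L_{(g')^{-1}})(s)\,a.$$
Since $sa \in A_{g'}$, the first summand lies in $\rho(L_{(g')^{-1}})(A_{g'}) \subset A_{[g],1}$; by Equation~(\ref{EQ2}) one has $\rho(L_{(g')^{-1}})(s) \in A_{(g')^{-1}}$, so the second summand lies in $A_{(g')^{-1}}A_{g'} \subset A_{[g],1}$ (using $g' \in [g]$). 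That closes~i).

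For ii), the argument is then structural. Gr-simplicity of $A$ forces the graded ideal $\mathcal{A}_{[g]}$ to equal either $\{0\}$ or $A$, and since $g \in \Lambda_G$ gives $0 \neq A_g \subset \mathcal{A}_{[g]}$, only $\mathcal{A}_{[g]} = A$ survives. Reading off the homogeneous components then yields simultaneously $[g] = \Lambda_G$ (so every element of $\Lambda_G$ is $\Lambda_G$-connected to $g$) and $A_1 = A_{[g],1}$, which is precisely the displayed identity.
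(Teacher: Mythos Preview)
Your proof is correct and follows essentially the same route as the paper, which likewise invokes Propositions~\ref{pro2} and~\ref{pro9} to absorb $A_k\,\mathcal{A}_{[g]}$ for $k\in\Lambda_G$ and then treats $A_1\,\mathcal{A}_{[g]}$ separately. Your derivation-based argument for $A_1\cdot\rho(L_{(g')^{-1}})(A_{g'})\subset A_{[g],1}$ in fact supplies a justification that the paper simply asserts without proof.
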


\begin{proof}
i) Since $\mathcal{A}_{[g]}A_1 \subset \mathcal{A}_{[g]}$, Propositions \ref{pro2} and \ref{pro9} allow us to assert  $$\mathcal{A}_{[g]}A = \mathcal{A}_{[g]}\Bigl(A_1 \oplus (\bigoplus\limits_{g' \in [g]}A_{g'}) \oplus (\bigoplus\limits_{h \notin [g]}A_{h})\Bigr) \subset \mathcal{A}_{[g]}.$$ We conclude $\mathcal{A}_{[g]}$ is an ideal of $A$ and, since by construction is $G$-graded, is a graded ideal.

\medskip

ii) The gr-simplicity of $A$ implies $\mathcal{A}_{[g]} = A$, for $g \in \Lambda_G$. From here, it is clear that $[g] = \Lambda_G$ and $A_1 = \sum\limits_{g \in \Lambda_G \cap \Sigma_G} \rho(L_{g^{-1}})(A_g) + \sum\limits_{g \in \Lambda_G}A_{g^{-1}}A_g$.
\end{proof}

\begin{theorem}\label{teo2}
Let $A$ be a commutative and associative $\mathbb{F}$-algebra associated to a $G$-graded Lie-Rinehart $\mathbb{F}$-algebra $L.$ Then $$A = V + \sum\limits_{[g] \in \Lambda_G/\approx}\mathcal{A}_{[g]},$$ where $V$ is a linear complement in $A_1$ of $\sum_{g \in \Lambda_G \cap \Sigma_G}\rho(L_{g^{-1}})(A_g) + \sum_{g \in \Lambda_G} A_{g^{-1}}A_g$ and any $\mathcal{A}_{[g]}$ is one of the graded ideals of $A$ described in Theorem \ref{teo-11}-i). Furthermore, $\mathcal{A}_{[g]}\mathcal{A}_{[h]} = 0$ when $[g] \neq [h].$
\end{theorem}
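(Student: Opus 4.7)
The plan is to mimic, on the algebra side, the argument of Theorem \ref{teo-2}. The ingredients are already in place: Theorem \ref{teo-11}-i) asserts that each $\mathcal{A}_{[g]}$ is a graded ideal of $A$, and Proposition \ref{pro9} gives $\mathcal{A}_{[g]}\mathcal{A}_{[h]} = 0$ whenever $[g]\neq[h]$. So the only thing left to verify is the spanning statement.

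First I would split $A$ by its grading, $A = A_1 \oplus \bigl(\bigoplus_{g\in\Lambda_G}A_g\bigr)$. For the homogeneous part of non-trivial degree, the partition of $\Lambda_G$ into equivalence classes under $\approx$ (Proposition \ref{pro-1}) yields
\[
\bigoplus_{g \in \Lambda_G}A_g \;=\; \sum_{[g]\in\Lambda_G/\approx}\Bigl(\bigoplus_{g' \in [g]}A_{g'}\Bigr) \;=\; \sum_{[g]\in\Lambda_G/\approx}A_{[g]},
\]
so this piece is automatically absorbed into $\sum_{[g]}\mathcal{A}_{[g]}$.

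Next I would handle the degree-$1$ part. By the very definition of $A_{[g],1}$, the sum $\sum_{[g]\in\Lambda_G/\approx}A_{[g],1}$ equals $\sum_{g\in\Lambda_G\cap\Sigma_G}\rho(L_{g^{-1}})(A_g) + \sum_{g\in\Lambda_G}A_{g^{-1}}A_g$, since every $g\in\Lambda_G$ lies in some class $[g]$ and conversely only elements of $\Lambda_G$ appear in the defining sums. Choosing $V$ to be any linear complement of this subspace inside $A_1$ gives
\[
A_1 \;=\; V \,+\, \sum_{[g]\in\Lambda_G/\approx}A_{[g],1}.
\]
Putting the two pieces together yields $A = V + \sum_{[g]\in\Lambda_G/\approx}\mathcal{A}_{[g]}$, and the orthogonality of distinct ideals is exactly the content of Proposition \ref{pro9}.

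There is no real obstacle here; the work has already been done in Propositions \ref{pro2}, \ref{pro9} and Theorem \ref{teo-11}. The only mild subtlety is to notice that $A_{[g],1}\subset A_1$ is genuine (guaranteed by Equation \eqref{EQ2} applied to $\rho(L_{(g')^{-1}})(A_{g'})\subset A_1$ when $g'\in\Sigma_G$, and by the grading for $A_{(g')^{-1}}A_{g'}\subset A_1$), so that choosing the complement $V$ is a well-posed step and the resulting decomposition respects the grading in the expected way.
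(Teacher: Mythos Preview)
Your proof is correct and follows essentially the same approach as the paper's own proof. The paper's argument is terser --- it simply writes $A = A_1 \oplus (\bigoplus_{g \in \Lambda_G}A_g) = V + \sum_{[g]}\mathcal{A}_{[g]}$ as ``clear'' and then invokes Proposition~\ref{pro9} for the orthogonality --- while you have unpacked the spanning step into its two natural halves (the non-trivial degrees via the partition of $\Lambda_G$, and the degree-$1$ part via the definition of $A_{[g],1}$ and the choice of $V$), but the logical content is identical.
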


\begin{proof}
We know that $\mathcal{A}_{[g]}$ is well-defined and, by Theorem \ref{teo-11}-i), a graded ideal of $A$, being clear that $$A = A_1 \oplus (\bigoplus\limits_{g \in \Lambda_G}A_g) = V + \sum\limits_{[g] \in \Lambda_G/\approx}\mathcal{A}_{[g]}.$$ Finally, Proposition \ref{pro9} gives $\mathcal{A}_{[g]}\mathcal{A}_{[h]}=0$ if $[g] \neq [h].$
\end{proof}

\medskip

Let us denote by ${\mathcal Ann}(A) := \{a \in A : aA = 0\}$ the {\it annihilator} of the commutative and associative algebra $A$.

\begin{corollary}\label{coro1}
Let $A$ be a commutative and associative $\mathbb{F}$-algebra associated to a $G$-graded Lie-Rinehart $\mathbb{F}$-algebra $L.$ If ${\mathcal Ann}(A) = 0$ and $$A_1 = \sum\limits_{g \in \Lambda_G \cap \Sigma_G}\rho(L_{g^{-1}})(A_g) + \sum\limits_{g \in \Lambda_G}A_{g^{-1}}A_g,$$ then $A$ is the direct sum of the  graded ideals given in Theorem \ref{teo-11}-i),
$$A = \bigoplus\limits_{[g] \in \Sigma_A/\approx}\mathcal{A}_{[g]}.$$ Furthermore, $\mathcal{A}_{[g]}\mathcal{A}_{[h]} = 0$ when $[g] \neq [h].$
\end{corollary}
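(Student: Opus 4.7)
The plan is to mimic the proof of Corollary \ref{coro-1}, replacing the role of the center ${\mathcal Z}(L)$ by the annihilator ${\mathcal Ann}(A)$, and the bracket orthogonality from Proposition \ref{pro-9} by the multiplicative orthogonality from Proposition \ref{pro9}. First I would observe that the hypothesis $A_1 = \sum_{g \in \Lambda_G \cap \Sigma_G}\rho(L_{g^{-1}})(A_g) + \sum_{g \in \Lambda_G}A_{g^{-1}}A_g$ forces the linear complement $V$ in the decomposition supplied by Theorem \ref{teo2} to be zero, so that
\[
A = \sum_{[g] \in \Lambda_G/\approx}\mathcal{A}_{[g]}.
\]
The fact that each $\mathcal{A}_{[g]}$ is a graded ideal of $A$ and that $\mathcal{A}_{[g]}\mathcal{A}_{[h]} = 0$ when $[g] \neq [h]$ is then immediate from Theorem \ref{teo-11}-i) and Proposition \ref{pro9}, so only the direct character of the sum needs justification.

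To verify directness, I would pick an arbitrary element $a \in \mathcal{A}_{[g]} \cap \bigl(\sum_{[h] \in \Lambda_G/\approx, [h] \neq [g]}\mathcal{A}_{[h]}\bigr)$ and show $a = 0$. From $a \in \mathcal{A}_{[g]}$ together with Proposition \ref{pro9} we obtain
\[
a\Bigl(\sum_{[h] \neq [g]}\mathcal{A}_{[h]}\Bigr) \subset \sum_{[h] \neq [g]}\mathcal{A}_{[g]}\mathcal{A}_{[h]} = 0,
\]
while from $a \in \sum_{[h] \neq [g]}\mathcal{A}_{[h]}$ we analogously get $a\,\mathcal{A}_{[g]} = 0$. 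Combining both with the already-established decomposition $A = \sum_{[g]}\mathcal{A}_{[g]}$ yields $aA = 0$, i.e.\ $a \in {\mathcal Ann}(A)$. The hypothesis ${\mathcal Ann}(A) = 0$ then gives $a = 0$, establishing the direct sum.

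I do not anticipate a real obstacle here: all the structural work has been absorbed into Theorem \ref{teo2}, Theorem \ref{teo-11}, and Proposition \ref{pro9}. The only mild point to keep in mind is that, unlike in Corollary \ref{coro-1} where one needed the extra step of invoking Equation \eqref{fundamental} to pass from $[v,L] = 0$ to $\rho(v) = 0$ and conclude $v \in {\mathcal Z}(L)$, here the annihilator is purely a multiplicative notion on $A$, so the argument closes off directly as soon as $aA = 0$ is obtained.
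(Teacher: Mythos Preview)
Your proposal is correct and follows essentially the same approach as the paper: first use the hypothesis on $A_1$ to reduce Theorem \ref{teo2} to $A = \sum_{[g]}\mathcal{A}_{[g]}$, then verify directness by showing any element of $\mathcal{A}_{[g]} \cap \sum_{[h]\neq[g]}\mathcal{A}_{[h]}$ annihilates all of $A$ via Proposition \ref{pro9}, hence lies in ${\mathcal Ann}(A)=0$. Your additional remark comparing with Corollary \ref{coro-1} is accurate but not needed for the argument itself.
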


\begin{proof}
Since $A_1 = \sum\limits_{g \in \Lambda_G \cap \Sigma_G}\rho(L_{g^{-1}})(A_g) + \sum\limits_{g \in \Lambda_G}A_{g^{-1}}A_g$ we obtain $A = \sum_{[g] \in \Lambda_G/\approx} \mathcal{A}_{[g]}$. To verify the direct character of the sum, take some $$a \in \mathcal{A}_{[g]} \cap \Bigl(\sum\limits_{[h]\in \Lambda_G/ \approx,
[h] \neq [g]}\mathcal{A}_{[h]}\Bigr).$$ Since $a \in \mathcal{A}_{[g]}$, the fact $\mathcal{A}_{[g]}\mathcal{A}_{[h]}=0$ when $[g] \neq [h]$ gives us  $$a\Bigl(\sum_{[h]\in \Lambda_G/ \approx, [h] \neq [g]} \mathcal{A}_{[h]}\Bigr) = 0.$$ In a similar way, since $a \in \sum_{[h]\in \Sigma_A/ \approx, [h] \neq [g]}\mathcal{A}_{[h]}$ we get $a\mathcal{A}_{[g]}=0.$ That is, $a \in {\mathcal Ann}(A)=0$.
\end{proof}

\section{Relating the decompositions of $L$ and $A$}

The aim of this section is to show that the decompositions of $L$ and $A$ as direct sum of ideals, given in Sections 2 and 3 respectively, are closely related.

Given a graded Lie-Rinehart algebra $(L,A)$ we call the {\em annihilator of $A$ in  $L$} (that is, using the structure of $L$ as $A$-module)  the set $${\mathcal Ann}_L(A):=\{v \in L : Av = 0\}.$$
Obviously, ${\mathcal Ann}_L(A)$ is an ideal of $L$. In fact, $A{\mathcal Ann}_L(A)= 0,$ also $[L,{\mathcal Ann}_L(A)] \subset {\mathcal Ann}_L(A)$ since $$A[L,{\mathcal Ann}_L(A)] = [L,A{\mathcal Ann}_L(A)] + \rho(L)(A){\mathcal Ann}_L(A) = 0,$$ and finally it verifies Equation \eqref{cond_ideal} $$A\bigl(\rho({\mathcal Ann}_L(A))(A)L\bigr) =
\bigl(A\rho({\mathcal Ann}_L(A))(A)\bigr)L =\rho(A{\mathcal Ann}_L(A))(A)L = \rho(0)(A)L = 0.$$

\begin{definition}\label{tight}\rm
A $G$-graded Lie-Rinehart algebra $(L,A)$ is {\it tight} if ${\mathcal Z}(L)={\mathcal Ann}_L(A)={\mathcal Ann}(A)=\{0\},$ $AA = A,$ $AL = L$ and
\begin{eqnarray*}
&& L_1 = \Bigl(\sum_{g \in \Sigma_G \cap \Lambda_G}A_{g^{-1}}L_g\Bigr) + \Bigl(\sum_{g \in \Sigma_G}[L_{g^{-1}}, L_g]\Bigr),\\
&& A_1 = \Bigl(\sum\limits_{g \in \Lambda_G \cap \Sigma_G}\rho(L_{g^{-1}})(A_g)\Bigr) + \Bigl(\sum\limits_{g \in \Lambda_G}A_{g^{-1}}A_g\Bigr).
\end{eqnarray*}
\end{definition}

\noindent If $(L,A)$ is tight then Corollary \ref{coro-1} and Corollary \ref{coro1} say that $$\hbox{$L = \bigoplus\limits_{[g] \in \Sigma_G/\sim} I_{[g]}$ \hspace{0.4cm} and \hspace{0.4cm} $A =\bigoplus\limits_{[g] \in \Lambda_G/\approx} \mathcal{A}_{[g]}$},$$ with any $I_{[g]}$ a graded ideal of $L$ verifying $[I_{[g]},I_{[h]}]=0$ if $[g] \neq [h],$ and any $\mathcal{A}_{[g]}$ a graded ideal of $A$ satisfying $\mathcal{A}_{[g]}\mathcal{A}_{[h]}=0$ if $[g] \neq [h]$.

\begin{proposition}
Let $(L,A)$ be a tight $G$-graded Lie-Rinehart algebra. Then for $[g] \in \Sigma_G/\sim$ there exists a unique $[h] \in \Lambda_G/\approx$ such that $\mathcal{A}_{[h]}I_{[g]} \neq 0$.
\end{proposition}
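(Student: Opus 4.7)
The plan is to prove existence and uniqueness separately.

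For existence, I appeal to Corollary \ref{coro1}: since $(L,A)$ is tight, $A = \bigoplus_{[h] \in \Lambda_G/\approx} \mathcal{A}_{[h]}$. If $\mathcal{A}_{[h]} I_{[g]} = 0$ for every $[h]$, then
$$A \, I_{[g]} = \sum_{[h]\in \Lambda_G/\approx} \mathcal{A}_{[h]} I_{[g]} = 0,$$
so $I_{[g]} \subset {\mathcal Ann}_L(A) = 0$, contradicting $L_{g'} \neq 0$ for any $g' \in [g]$.

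For uniqueness I argue by contradiction: suppose there exist $[h_1] \neq [h_2]$ with $\mathcal{A}_{[h_i]} I_{[g]} \neq 0$ for $i=1,2$. I shall produce a $\Lambda_G$-connection between some $h_1' \in [h_1]$ and some $h_2' \in [h_2]$, forcing $[h_1]=[h_2]$. The first step is a reduction: decomposing $\mathcal{A}_{[h_i]}$ and $I_{[g]}$ into homogeneous summands and expanding the product, I show case-by-case--using associativity of $A$ on the $A_{(h')^{-1}}A_{h'}$-generators of $A_{[h_i],1}$, the identity
$$\rho(L_{(h')^{-1}})(A_{h'})\,w = [L_{(h')^{-1}}, A_{h'} w] - A_{h'}[L_{(h')^{-1}},w]$$
from \eqref{fundamental} for the $\rho$-generators, and analogous expansions of the $A_{(g')^{-1}}L_{g'}$ and $[L_{(g')^{-1}}, L_{g'}]$ generators of $L_{[g],1}$--that for each $i$ at least one of the following holds: either (a) some $h_i' \in [h_i]\cap[g]$ exists, or (b) there exist $h_i' \in [h_i]$ and $g_i' \in [g]$ with $A_{h_i'} L_{g_i'} \neq 0$, so that $\sigma_i := h_i' g_i' \in \Sigma_G \cup \{1\}$; moreover, when $\sigma_i \in \Sigma_G$, the sequence $\{g_i', h_i'\}$ is a $\Sigma_G$-connection witnessing $\sigma_i \in [g]$.

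If (a) holds for both indices, then $h_1' \sim h_2'$ inside $[g]$ and the witnessing $\Sigma_G$-connection is already a $\Lambda_G$-connection since $\Sigma_G^{\pm}\subset \Sigma_G^{\pm}\cup \Lambda_G^{\pm}$. Otherwise, assuming (b) for both (the mixed case is analogous), if $\sigma_1, \sigma_2 \in \Sigma_G$ I fix a $\Sigma_G$-connection $\{t_1 = \sigma_1, t_2, \ldots, t_p\}$ from $\sigma_1$ to $\sigma_2$ and consider the augmented sequence
$$\{h_1',\, g_1',\, t_2,\, \ldots,\, t_p,\, (g_2')^{\varepsilon}\},$$
with $\varepsilon \in \{+1,-1\}$ chosen so the terminal product equals $h_2'$ or $(h_2')^{-1}$; here abelianness of $G$ enters via $\sigma_2^{-1} g_2' = (h_2')^{-1}$. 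The partial products are $h_1' \in \Lambda_G$, $\sigma_1 \in \Sigma_G$, $t_1 t_2, \ldots, t_1 \cdots t_p \in \Sigma_G^{\pm}$, so this is a valid $\Lambda_G$-connection. If instead $\sigma_1 = 1$ (so $h_1' = (g_1')^{-1}$), starting from a $\Sigma_G$-connection $\{g_1', x_2, \ldots, x_n\}$ with target $\sigma_2$ (or $g_2'$ when $\sigma_2 = 1$), I use the inverted sequence $\{h_1', x_2^{-1}, \ldots, x_n^{-1}, (g_2')^{\varepsilon}\}$ (omitting the last factor when $\sigma_2=1$): its partial products are $y_i^{-1}$, where $y_i = g_1' x_2 \cdots x_i \in \Sigma_G^{\pm}$, so they remain in the allowed alphabet. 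In every sub-case this yields $h_1' \approx h_2'$, hence $[h_1]=[h_2]$, the required contradiction.

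The hard part will be the preparatory reduction: a somewhat tedious case analysis showing that every nonzero summand-product in $\mathcal{A}_{[h_i]} I_{[g]}$--especially those involving $A_{[h_i],1}$ or $L_{[g],1}$--can be traced back to a clean witness of type (a) or (b). Once this is in place, the combinatorial construction of the $\Lambda_G$-connection is a uniform sign-and-inverse manipulation exploiting abelianness of $G$, closely mirroring the arguments in the proofs of Propositions \ref{pro-9} and \ref{pro9}.
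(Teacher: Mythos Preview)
Your proof is correct and essentially parallel to the paper's, but with two notable differences worth recording. For existence, the paper argues via $AL=L$ and $\mathcal{Z}(L)=0$: from $AI_{[g]}=0$ one gets $[I_{[g]},AL]=[I_{[g]},AI_{[g]}]=0$, hence $I_{[g]}\subset\mathcal{Z}(L)$. Your route through $\mathcal{Ann}_L(A)=0$ is shorter and uses a different piece of the tightness hypothesis. For uniqueness, the paper simply asserts that from $\mathcal{A}_{[h]}I_{[g]}\neq 0$ one can pick $h'\in[h]$ and $g'\in[g]$ with $A_{h'}L_{g'}\neq 0$, and then runs a four-case analysis on whether $h'g'=1$ and $m'g''=1$; your explicit reduction to witnesses of type (a) or (b)---unpacking the degree-$1$ summands of $\mathcal{A}_{[h]}$ and $I_{[g]}$ via associativity and identity~\eqref{fundamental}---supplies the justification the paper elides, at the cost of an extra case (a) in the subsequent connection-building. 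Once the reduction is in hand, your construction of the $\Lambda_G$-connection is the same sign-and-inverse manipulation as the paper's, just organised around the (a)/(b) dichotomy rather than the paper's four cases on $h'g'$ and $m'g''$.
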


\begin{proof}
First we prove the existence. Given $[g] \in \Sigma_G/\sim,$ let us suppose that $AI_{[g]} = 0$. Since $I_{[g]}$ is a graded ideal it follows $$\hbox{$[I_{[g]},AL] = \Bigl[I_{[g]}, \bigoplus\limits_{h \in \Sigma_G/ \sim} AI_{[h]}\Bigr] = [I_{[g]},AI_{[g]}] = 0.$}$$ By hypothesis $AL = L,$ then $I_{[g]} \subset \mathcal{Z}(L) = \{0\},$ a contradiction. Since $A =\bigoplus_{[g] \in \Lambda_G/\approx} \mathcal{A}_{[g]},$ there exists $[h] \in \Lambda_G/\approx$ such that $\mathcal{A}_{[h]}I_{[g]} \neq 0$.

Now we prove that $[h]$ is unique. Suppose that $m$ is another element of $G$ which satisfies ${\mathcal A}_{[m]}I_{[g]} \neq 0$. From $\mathcal{A}_{[h]}I_{[g]} \neq 0$ and ${\mathcal A}_{[m]}I_{[g]} \neq 0$ we can take $h' \in {[h]}, m' \in {[m]}$ and $g',g'' \in {[g]}$ such that $A_{h'}L_{g'} \neq 0$ and $A_{m'}L_{g''} \neq 0$. Since $g',g'' \in {[g]}$, we can fix a $\Sigma_G$-connection $$\{g', g_2,\dots,g_n\} \subset \Sigma_G^{\pm} \cup \Lambda_G^{\pm}$$ from $g'$ to $g''$.

We have to distinguish four cases:

\begin{itemize}
\item If $h'g' \neq 1$ and $m'g'' \neq 1$, then $h'g'$, $m'g'' \in \Sigma_G$, and so $h' \approx m'.$ Indeed, in the case $g'g_2 \cdots g_n = g'',$ the $\Lambda_G$-connection from $h'$ to $m'$ is $$\{h',g', (h')^{-1},g_2,\dots,g_n,m',(g'')^{-1}\},$$ and in the case $g'g_2 \cdots g_n= (g'')^{-1}$ is $$\{h',g',(h')^{-1},g_2,\dots,g_n, (m')^{-1},g''\}.$$ From here $[h]=[m]$.

\item If $h'g' = 1$ and $m'g'' \neq 1$, we get $h'=(g')^{-1}$,
$m'g'' \in \Sigma_G$ and then $$\{(g')^{-1},g_2^{-1},\dots,g_n^{-1},(m')^{-1}, g''\}$$ is a $\Lambda_G$-connection from $h'$ to $m'$ in the case $g'g_1 \cdots g_n= g'',$ while $$\{(g')^{-1},g_2^{-1},\dots,g_n^{-1},m',(g'')^{-1}\}$$ is a $\Lambda_G$-connection in the case $g'g_1 \cdots g_n= (g'')^{-1}$. From here, $[h]=[m]$.

\item Suppose $h'g' \neq 1$ and $m'g''=1$. We can argue as in the previous item to get $[h]=[m]$.

\item Finally, we consider $h'g'= m'g'' = 1$. Hence $h'=(g')^{-1}, m'=(g'')^{-1}.$ Then
$$\{(g')^{-1},g_2^{-1},\dots,g_n^{-1}\}$$ is a $\Lambda_G$-connection between $h'$ and $m'$ which implies $[h]=[m]$.
\end{itemize}

We conclude $[h]$ is the unique element in $\Lambda_G/\approx$ such that $\mathcal{A}_{[h]}I_{[g]} \neq 0$ for the given $[g] \in \Sigma_G/\sim$.
\end{proof}

\noindent Observe that the above proposition shows that $I_{[g]}$ is an $\mathcal{A}_{[h]}$-module. Hence we can assert the following result.

\begin{theorem}\label{lema_final}
Let $(L,A)$ be a tight graded Lie-Rinehart $\mathbb{F}$-algebra over an associative and commutative $\mathbb{F}$-algebra $A$. Then $$\hbox{$L =\bigoplus\limits_{i\in I}I_i$ \hspace{0.4cm} and \hspace{0.4cm} $A = \bigoplus\limits_{j \in J}A_j$}$$ where any $I_i$ is a non-zero graded ideal of $L$ satisfying $[I_i,I_h]=0$ when $i \neq h$, and any $A_j$ is a non-zero graded ideal of $A$ such that $A_jA_k=0$ when $j \neq k$. Moreover, both decompositions satisfy that for any $r \in I$ there exists a unique $\tilde{r} \in J$ such that $$A_{\tilde{r}}I_r \neq 0.$$ Furthermore, any $I_r$ is a graded Lie-Rinehart algebra over $A_{\tilde{r}}$.
\end{theorem}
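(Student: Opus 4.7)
The plan is to assemble the theorem directly from the three principal results already proved---using tightness as the hypothesis that activates each of them---and then to check that the ``graded Lie-Rinehart'' structure on $(I_r,A_{\tilde r})$ survives restriction from $(L,A)$.

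First, tightness gives precisely what Corollary~\ref{coro-1} and Corollary~\ref{coro1} require: ${\mathcal Z}(L)=0$ together with the prescribed form of $L_1$ yields $L=\bigoplus_{[g]\in\Sigma_G/\sim}I_{[g]}$ with $[I_{[g]},I_{[h]}]=0$ for $[g]\ne[h]$, while ${\mathcal Ann}(A)=0$ together with the prescribed form of $A_1$ yields $A=\bigoplus_{[g]\in\Lambda_G/\approx}{\mathcal A}_{[g]}$ with ${\mathcal A}_{[g]}{\mathcal A}_{[h]}=0$ for $[g]\ne[h]$. Relabelling $I:=\Sigma_G/\sim$ and $J:=\Lambda_G/\approx$ produces the two direct-sum decompositions of the statement; the existence and uniqueness of $\tilde r\in J$ with $A_{\tilde r}I_r\ne 0$ for each $r\in I$ is exactly the content of the proposition immediately preceding the theorem.

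There remains the claim that each $I_r$ is a graded Lie-Rinehart algebra over $A_{\tilde r}$. The graded Lie $\mathbb{F}$-algebra structure on $I_r$ comes from Theorem~\ref{teo-1}-i), the graded associative commutative $\mathbb{F}$-algebra structure on $A_{\tilde r}$ from Theorem~\ref{teo-11}-i), and the $A_{\tilde r}$-module structure on $I_r$ from $A_{\tilde r}I_r\subset AI_r\subset I_r$. The grading compatibilities \eqref{EQ1} and \eqref{EQ2} for $(I_r,A_{\tilde r})$ are obtained by intersecting their counterparts for $(L,A)$ with $I_r$ and $A_{\tilde r}$, and once the anchor $\rho_r:I_r\to\mbox{Der}(A_{\tilde r})$ is known to be well-defined, its $A_{\tilde r}$-linearity, the Lie homomorphism property, and the fundamental identity \eqref{fundamental} are all immediate restrictions of the corresponding properties of $\rho$ (using $aw\in I_r$ for $a\in A_{\tilde r}$, $w\in I_r$).

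The real difficulty---and the step I expect to be the main obstacle---is showing $\rho(v)(A_{\tilde r})\subset A_{\tilde r}$ for every $v\in I_r$, so that $\rho_r:=\rho|_{I_r}$ genuinely takes values in $\mbox{Der}(A_{\tilde r})$. For $a\in A_{\tilde r}$, decompose $\rho(v)(a)=\sum_{j\in J}c_j$ with $c_j\in A_j$. For any fixed $j\ne\tilde r$ and $b\in A_j$, the relation $ab\in A_{\tilde r}A_j=0$ together with Leibniz gives
\[
 0=\rho(v)(ab)=\rho(v)(a)\,b+a\,\rho(v)(b).
\]
The first summand collapses to $c_jb\in A_j$ by $A_kA_j=0$ for $k\ne j$, and the second to an element of $A_{\tilde r}$ by $A_{\tilde r}A_k=0$ for $k\ne\tilde r$; being in orthogonal summands of the direct sum $A=\bigoplus_{j\in J}A_j$ and summing to zero, each vanishes. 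Hence $c_jA_j=0$, and combined with $c_jA_k\subset A_jA_k=0$ for $k\ne j$ this gives $c_jA=0$, so $c_j\in{\mathcal Ann}(A)=0$ by tightness. Therefore $\rho(v)(a)=c_{\tilde r}\in A_{\tilde r}$.
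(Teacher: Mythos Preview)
Your proposal is correct and follows the paper's own route: the decompositions of $L$ and $A$ come from Corollaries~\ref{coro-1} and~\ref{coro1}, and the pairing $r\mapsto\tilde r$ is precisely the preceding proposition, just as the paper indicates. Where you go further than the paper is in the final claim: the paper simply asserts that ``$I_{[g]}$ is an $\mathcal{A}_{[h]}$-module'' and then states the theorem, leaving the verification that $\rho$ restricts to $\mbox{Der}(A_{\tilde r})$ implicit; your annihilator argument using Leibniz and ${\mathcal Ann}(A)=0$ fills this in cleanly and correctly, and is a genuine addition rather than a deviation.
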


\medskip

\section{Decompositions through the families of the  simple ideals}

In this last section we are going to show that, under mild conditions, the decomposition of a graded Lie-Rinehart algebra $(L,A)$ given in Theorem \ref{lema_final} can be obtained by means of the families of the gr-simple ideals of $L$ and of the gr-simple ideals of $A$. At following we suppose that $\Sigma_G$ is {\em symmetrical}, meaning that, if $g \in \Sigma_G$ then $g^{-1} \in \Sigma_G,$ and also that $\Lambda_G$ is symmetrical in the same sense.

Next we introduce the concepts of $G$-multiplicativity and maximal length in the framework of graded Lie-Rinehart algebras in a similar way to the ones for other classes of graded Lie algebras and graded Leibniz algebras (see \cite{YoGradLie, YoLeibniz} for these notions and examples).

\begin{definition}\rm
We say that a $G$-graded Lie-Rinehart algebra $(L,A)$ is {\it
$G$-multiplicative} if for any $g,h \in \Sigma_G$ and any $k,j \in \Lambda_G$ the following conditions hold.
\begin{itemize}
\item If $gh \in \Sigma_G$ then $[L_g, L_h] \neq 0.$

\item If $kg \in \Sigma_G$ then $A_kL_g \neq 0.$

\item If $kj \in \Lambda_G$ then $A_kA_j \neq 0.$
\end{itemize}
\end{definition}

\begin{definition}\rm
A graded Lie-Rinehart algebra $(L,A)$ is {\it of maximal length} if $\dim L_g = \dim A_k=1$ for $g \in \Sigma_G, k\in \Lambda_G$.
\end{definition}

\noindent Observe that if $L,A$ are gr-simple algebras then $\mathcal{Z}(L) = {\mathcal Ann}_L(A) = {\mathcal Ann}(A) =\{0\}$. Also, as consequence of Theorem \ref{teo-1}-ii) and Theorem \ref{teo-11}-ii), we get that all of the elements in $\Sigma_G,\Lambda_G$ are $\Sigma_G$-connected and $\Lambda_G$-connected, respectively, and $$L_1 = \Bigl(\sum_{g \in \Sigma_G \cap \Lambda_G}A_{g^{-1}}L_g\Bigr) + \Bigl(\sum_{g \in \Sigma_G}[L_{g^{-1}},L_{g}]\Bigr),$$    $$A_1 = \Bigl(\sum_{k \in \Lambda_G \cap \Sigma_G}\rho(L_{k^{-1}})(A_k)\Bigr) + \Bigl(\sum_{k \in \Lambda_G}A_{k^{-1}}A_k\Bigr).$$ From here, the conditions for $(L,A)$ of being tight (see Definition \ref{tight}) together with the one of having $\Sigma_G, \Lambda_G$ all of their elements $\Sigma_G$-connected and $\Lambda_G$-connected, respectively, are necessary conditions to get a characterization of the gr-simplicity of $L$ and $A$. Actually, under the hypothesis of being $(L,A)$ of maximal length and $G$-multiplicative, these are also sufficient conditions as Theorem \ref{teo100} shows.

\begin{lemma}\label{lemma_I_zero}
Let $(L,A)$ be a tight $G$-graded Lie-Rinehart algebra of maximal length and $G$-multiplicative. If $I$ is a non-zero graded ideal of $L$ then $I \not\subset L_1$.
\end{lemma}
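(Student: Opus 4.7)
The plan is to argue by contradiction: assume $I$ is a nonzero graded ideal of $L$ with $I \subset L_1$, and derive that $I \subset {\mathcal Ann}_L(A)$, which is trivial by tightness. Since $I$ is graded, the hypothesis $I \subset L_1$ forces $I_g = I \cap L_g = 0$ for every $g \neq 1$, so $I = I \cap L_1$. Fix a nonzero $v \in I$. For any $h \in \Lambda_G$, the inclusions $A_h v \subset L_h$ (from Equation \eqref{EQ1}) and $A_h v \subset I$ (since $I$ is an $A$-submodule) give $A_h v \subset I \cap L_h = 0$, and hence $A_h v = 0$.

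The next step is to show $A_1 v = 0$, using the tightness identity
\[
A_1 = \Bigl(\sum_{g \in \Sigma_G \cap \Lambda_G}\rho(L_{g^{-1}})(A_g)\Bigr) + \Bigl(\sum_{g \in \Lambda_G} A_{g^{-1}} A_g\Bigr).
\]
For each term $A_{g^{-1}} A_g v$ with $g \in \Lambda_G$, associativity of the $A$-action reduces it to $A_{g^{-1}}(A_g v) = 0$ by the previous paragraph. For a term $\rho(x)(a) v$ with $x \in L_{g^{-1}}$, $a \in A_g$, and $g \in \Sigma_G \cap \Lambda_G$, I would invoke Equation \eqref{fundamental} in the form
\[
\rho(x)(a)\, v = [x, av] - a[x,v].
\]
Both $av$ and $[x,v]$ lie in $I$ (because $I$ is simultaneously an $A$-submodule and a Lie ideal) and in the homogeneous components $L_g$ and $L_{g^{-1}}$ respectively; since neither $g$ nor $g^{-1}$ equals $1$, both vanish, yielding $\rho(x)(a) v = 0$.

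Combining the two steps, $Av = A_1 v + \sum_{h \in \Lambda_G} A_h v = 0$, so $v \in {\mathcal Ann}_L(A)$, which is zero by tightness. This contradicts $v \neq 0$ and completes the argument.

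I do not anticipate a real obstacle: the proof is essentially an exercise in combining tightness with the observation that the grading of $I$ collapses onto the identity component. It is worth noting that this argument does not appear to use maximal length or $G$-multiplicativity; those hypotheses are presumably needed in the subsequent results of the section rather than in this initial lemma.
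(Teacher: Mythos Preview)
Your argument is correct and follows essentially the same route as the paper: assume $I\subset L_1$, observe that $A_hI=0$ for every $h\in\Lambda_G$ by grading, then use the tightness decomposition of $A_1$ together with Equation~\eqref{fundamental} to obtain $A_1I=0$, and conclude $I\subset{\mathcal Ann}_L(A)=0$. The only cosmetic difference is that you work elementwise with a fixed $v$ and kill $av$ and $[x,v]$ directly via $I\cap L_g=0$, whereas the paper first records $[L_g,I]=0$ and $A_kI=0$ as separate facts and applies them to the whole ideal; your remark that maximal length and $G$-multiplicativity play no role here is also accurate.
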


\begin{proof}
Suppose there exists a non-zero graded ideal $I$ of $L$
such that $I \subset L_1$. Then $[\oplus_{g \in \Sigma_G} L_g ,I] \subset (\oplus_{g \in \Sigma_G} L_g) \cap L_1 = 0$ and so
\begin{equation}\label{I_zero}
[L_g ,I] = 0 \hspace{0.2cm} {\rm for} \hspace{0.1cm} {\rm any} \hspace{0.1cm} g \in \Sigma_G.
\end{equation}

\noindent Also, $(\oplus_{k \in \Lambda_G}A_k)I \subset (\oplus_{k \in \Sigma_G}L_k) \cap L_1 = 0,$ then
\begin{equation}\label{I_zero2}
A_kI = 0 \hspace{0.2cm} {\rm for} \hspace{0.1cm} {\rm any} \hspace{0.1cm} k \in \Lambda_G.
\end{equation}

\noindent Since $A_1 = \Bigl(\sum_{k \in \Lambda_G \cap \Sigma_G}\rho(L_{k^{-1}})(A_k)\Bigr) + \Bigl(\sum_{k \in \Lambda_G}A_{k^{-1}}A_k\Bigr)$ we get $$A_1I \subset \sum_{k \in \Lambda_G \cap \Sigma_G}\rho(L_{k^{-1}})(A_k)I + \sum_{k \in \Lambda_G}(A_{k^{-1}}A_k)I.$$Now observe that,  in the first summand,   by Equations \eqref{fundamental}, \eqref{I_zero} and  \eqref{I_zero2} we get $$\rho(L_{k^{-1}})(A_k)I \subset [L_{k^{-1}},A_kI] + A_k[L_{k^{-1}},I] = 0,$$
and, in the second summand,  since $I$ is an $A$-module and by Equation \eqref{I_zero2} we have $$(A_{k^{-1}}A_k)I= A_{k^{-1}}(A_kI) = 0.$$
Hence $A_1I=0$, and with Equation \eqref{I_zero2} we conclude $I \subset {\mathcal Ann}_L(A)=\{0\}$ because $L$ is tight, a contradiction. Therefore $I\not\subset L_1$.
\end{proof}

\begin{proposition}\label{teo-3}
Let $(L,A)$ be a tight $G$-graded Lie-Rinehart algebra of maximal length, $G$-multiplicative and all the elements of $\Sigma_G$ being $\Sigma_G$-connected. Then either $L$ is gr-simple or $L = I \oplus I'$ where $I,I'$ are gr-simple ideals of $L$.
\end{proposition}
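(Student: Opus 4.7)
The strategy refines the $\Sigma_G$-connection relation: I define a finer equivalence $\sim'$ on $\Sigma_G$ by declaring $g \sim' g'$ if there exists a sequence $\{g_1, \dots, g_n\} \subset \Sigma_G \cup \Lambda_G$ (using the symmetry of $\Sigma_G$ and $\Lambda_G$ to suppress the $\pm$) with $g_1 = g$, intermediate partial products $g_1, g_1g_2, \dots, g_1 \cdots g_{n-1}$ in $\Sigma_G$, and final product $g_1 g_2 \cdots g_n = g'$ exactly, \emph{not} $(g')^{-1}$. Reflexivity, symmetry (reverse a connection by $\{g', g_n^{-1}, \dots, g_2^{-1}\}$), and transitivity (concatenate) are verified as in Proposition \ref{pro1}, making $\sim'$ an equivalence relation. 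Since any $\sim$-connection lands at $g'$ or $(g')^{-1}$, the hypothesis that $\Sigma_G$ is a single $\sim$-class forces $\Sigma_G$ to split into either one or two $\sim'$-classes, the two being related by inversion.

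The key technical lemma I would prove first is a propagation principle: for any graded ideal $I$ of $L$, the set $\Sigma_I := \{g \in \Sigma_G : L_g \subset I\}$ is a union of $\sim'$-classes. I would argue this by induction along a $\sim'$-connection starting at $g \in \Sigma_I$: at step $i$, the next partial product $h_{i+1} = h_i g_{i+1}$ lies in $\Sigma_G$, $G$-multiplicativity guarantees $[L_{h_i}, L_{g_{i+1}}] \neq 0$ (if $g_{i+1} \in \Sigma_G$) or $A_{g_{i+1}} L_{h_i} \neq 0$ (if $g_{i+1} \in \Lambda_G$), and maximal length turns the non-vanishing subspace into the full one-dimensional $L_{h_{i+1}}$; finally the $A$-module and Lie ideal properties of $I$ propagate $L_{h_i} \subset I$ to $L_{h_{i+1}} \subset I$.

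The dichotomy of the proposition then falls out of the two possible $\sim'$-class structures. In Case A ($\Sigma_G$ is a single $\sim'$-class) any non-zero graded ideal $I$ contains some $L_{g_0}$ by Lemma \ref{lemma_I_zero} and maximal length, hence $\Sigma_I = \Sigma_G$ by the propagation lemma, so $\bigoplus_{g \in \Sigma_G} L_g \subset I$; combining with $A I \subset I$, $[L, I] \subset I$, and the tightness expansion $L_1 = \sum_{g \in \Sigma_G \cap \Lambda_G} A_{g^{-1}} L_g + \sum_{g \in \Sigma_G} [L_{g^{-1}}, L_g]$ forces $L_1 \subset I$, so $I = L$ and $L$ is gr-simple. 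In Case B ($\Sigma_G = \Sigma_+ \cup \Sigma_-$ with $\Sigma_- = \Sigma_+^{-1}$), set $V_\pm := \bigoplus_{g \in \Sigma_\pm} L_g$ and let $I, I'$ be the smallest graded ideals of $L$ containing $V_+, V_-$ respectively. Using tightness, $I + I' = L$: each $A_{g^{-1}} L_g$ lies in $I$ or $I'$ according as $g \in \Sigma_+$ or $g \in \Sigma_-$, and each $[L_{g^{-1}}, L_g]$ lies in $I$ once $L_g \subset I$, i.e., whenever $g \in \Sigma_+$. Assuming $I$ and $I'$ are both proper, the propagation lemma gives $\Sigma_{I \cap I'} \subset \Sigma_I \cap \Sigma_{I'} \subset \Sigma_+ \cap \Sigma_- = \emptyset$, so $I \cap I' \subset L_1$, and Lemma \ref{lemma_I_zero} forces $I \cap I' = 0$; hence $L = I \oplus I'$. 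Gr-simplicity of each factor follows by reapplying the Case A argument inside it, as each contains only one $\sim'$-class of support.

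The main obstacle is verifying that in Case B both $I$ and $I'$ are indeed proper whenever $L$ fails to be gr-simple. The only mechanism by which the ideal $I$ generated by $V_+$ can reach into $V_-$ is the anchor-induced coupling $\rho(L_g)(A_{g^{-1}}) L_{g'}$ with $g \in \Sigma_+, g' \in \Sigma_-$; every other generating operation respects $\sim'$-classes by the analysis in the propagation lemma. The plan is a symmetry argument: if this coupling is non-trivial and forces $\Sigma_I = \Sigma_G$, the mirror coupling $\rho(L_{g^{-1}})(A_g) L_{g''}$ (with $g^{-1} \in \Sigma_-, g'' \in \Sigma_+$) simultaneously forces $\Sigma_{I'} = \Sigma_G$; then $I = I' = L$ leaves no room for any proper non-zero graded ideal of $L$ by propagation combined with Lemma \ref{lemma_I_zero}, so $L$ must be gr-simple, returning us to Case A and closing the dichotomy.
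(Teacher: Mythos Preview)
Your refinement $\sim'$ and the propagation lemma are correct and give a clean way to organise the argument; Case A is fine. The gap is in Case B, and it lies exactly where you flag it: you do not establish that both $I$ and $I'$ are proper when $L$ fails to be gr-simple. First, the claim that the anchor coupling $\rho(L_g)(A_{g^{-1}})L_{g'}$ is the \emph{only} mechanism by which the ideal generated by $V_+$ reaches $V_-$ is not right: after one round you acquire $L_1$-elements such as $[L_{g^{-1}},L_g]$ and $A_{g^{-1}}L_g$ with $g\in\Sigma_+$, and a further bracket $[L_{g^{-1}},\cdot]$ or multiplication by some $A_k$ applied to these can land in $L_h$ with $h\in\Sigma_-$; your propagation lemma controls what happens starting from $V_+$, not from $I\cap L_1$. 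Second, and more seriously, the ``mirror'' step is unjustified: nothing forces $\rho(L_{g^{-1}})(A_g)$ to be non-zero just because $\rho(L_g)(A_{g^{-1}})$ is, so you cannot conclude that $I=L$ implies $I'=L$. The trichotomy (both proper / both equal to $L$ / exactly one equal to $L$) has a live third branch that your argument does not close.

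The paper avoids this by reversing the logic. Rather than generating both pieces from scratch, it begins with an \emph{arbitrary} non-zero graded ideal $I$ (which is proper exactly when $L$ is not gr-simple), uses propagation to obtain $\Sigma_G=\Sigma_I\,\dot\cup\,\Sigma_I^{-1}$, and then writes down the complement explicitly as $I':=\bigl(\sum_{g\in\Sigma_I^{-1},\,g^{-1}\in\Lambda_G}A_{g^{-1}}L_g\bigr)\oplus\bigoplus_{g\in\Sigma_I^{-1}}L_g$. The verification that $I'$ is an ideal repeatedly exploits that $I$ is already known to be an ideal: for instance, one shows $[L_{g^{-1}},L_g]=0$ by noting that a non-zero bracket would lie in $I$ and then force $L_g=[[L_{g^{-1}},L_g],L_g]\subset I$, contradicting $g\in\Sigma_I^{-1}$. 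Your framework can be salvaged this way: once $L$ is not gr-simple, pick any proper graded ideal $J$, deduce $\Sigma_J\in\{\Sigma_+,\Sigma_-\}$ from propagation, and build the complement explicitly rather than as a smallest ideal.
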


\begin{proof}
Let $I$ be a non-zero graded ideal of $L$. By Lemma \ref{lemma_I_zero} we can write $$I = (I \cap L_1) \oplus \Bigl(\bigoplus_{g \in \Sigma_G}(I \cap L_g)\Bigr)$$ with $I \cap L_g \neq 0$ for at least one $g \in \Sigma_G$. Let us denote by $I_g := I \cap L_g$ and by $\Sigma_I := \{g \in \Sigma_G : I_g \neq 0\}$. Then we can write $I = (I \cap L_1) \oplus (\bigoplus_{g \in \Sigma_I}I_g).$ Let us distinguish two cases.

{\bf Case a).} In the first case assume there exists $g \in \Sigma_I$ such that $g^{-1} \in \Sigma_I$. In this case the goal is to prove that $L$ is gr-simple. As $0 \neq I_g \subset I$ and we can assert by the maximal length of $(L,A)$ that
\begin{equation}\label{I-1}
L_g \subset I.
\end{equation}
Now, take some $h \in \Sigma_G \setminus \{g,g^{-1}\}$. Since $g$ is $\Sigma_G$-connected to $h$, we have a $\Sigma_G$-connection $\{g_1,\dots,g_n\} \subset \Sigma_G \cup \Lambda_G$ with $n \geq 2$, from $g$ to $h$ satisfying:

\medskip

$\{g_1 =g ,g_1g_2,g_1g_2g_3,\dots,g_1g_2\cdots g_{n-1}\} \subset \Sigma_G.$

$g_1g_2 \cdots g_n \in \{h,h^{-1}\}.$

\medskip

\noindent Taking into account $g_1 \in \Sigma_I,$ we have that if $g_2 \in \Lambda_G$ (resp., $g_2 \in \Sigma_G$), the $G$-multiplicativity and the maximal length of $L$ allow us to assert that $$0 \neq A_{g_2}L_{g_1} = L_{g_1g_2} \hspace{0.4cm} (\hbox{resp., } 0 \neq [L_{g_1}, L_{g_2}] = L_{g_1g_2}).$$ Since $0 \neq L_{g_1} \subset I$, as consequence of Equation \eqref{I-1}, we get in both cases that $$0 \neq L_{g_1g_2} \subset I.$$

\noindent A similar argument applied to $g_1g_2 \in \Sigma_G, g_3 \in \Sigma_G$ and $g_1g_2g_3 \in \Sigma_G$ gives us $0 \neq L_{g_1g_2g_3} \subset I.$ We can iterate this process with the connection $\{g_1,\dots,g_n\}$ to get $$0 \neq L_{g_1g_2\cdots g_n} \subset I.$$

\noindent Thus, we show that
\begin{equation}\label{either-1}
\mbox{for any } h \in \Sigma_G, \mbox{ we have that } 0 \neq L_m \subset I \mbox{ for some } m \in \{h,h^{-1}\}.
\end{equation}

\noindent Since $g^{-1} \in \Sigma_I$ then $\{g_1^{-1},\dots,g_n^{-1}\}$ is a $\Sigma_G$-connection from $g^{-1}$ to $h$ satisfying $$g_1^{-1}g_2^{-1} \cdots g_n^{-1} = m^{-1}.$$ By arguing as above we get,
\begin{equation}\label{I-III}
0 \neq L_{m^{-1}} \subset I
\end{equation}
and so $\Sigma_I = \Sigma_G.$ From the fact $L_1 = \bigl(\sum\limits_{g \in \Sigma_G \cap \Lambda_G}A_{g^{-1}}L_{g}\bigr) + \bigl(\sum\limits_{g \in \Sigma_G}[L_{g^{-1}},L_g]\bigr)$ we also have
\begin{equation}\label{equa-2}
L_1 \subset I.
\end{equation}
From \eqref{I-1}-\eqref{equa-2} we obtain $L \subset I,$ and so $L$ is gr-simple.

\medskip

{\bf Case b).} Suppose that for any $g \in \Sigma_I$ we have that $g^{-1} \notin \Sigma_I.$  In this case the goal  is to prove that  $L = I \oplus I'$ where $I,I'$ are gr-simple ideals of $L$. Observe that by arguing as in the previous case we can write
\begin{equation}\label{eq50}
\Sigma_G = \Sigma_I \; \dot{\cup} \; \Sigma_I^{-1}
\end{equation}
where $\Sigma_I^{-1} := \{g^{-1}: g \in \Sigma_I\},$
because if $g \in \Sigma_G$ satisfies $g \notin \Sigma_I$, by sentence \eqref{either-1} necessarily  $g^{-1} \in \Sigma_I$ and then $g\in \Sigma_I^{-1}$.

Let us denote by $$I' := \Bigl(\sum\limits_{g \in \Sigma_I^{-1},g^{-1} \in \Lambda_G}A_{g^{-1}}L_g\Bigr) \oplus \Bigl(\bigoplus\limits_{g \in \Sigma_I^{-1}}L_g\Bigr).$$ By construction, all homogeneous components of $I'$ are not contained in $I$. Our next aim is to show that $I'$ is a graded ideal of $L$. By construction $I'$ is $G$-graded, let us prove that $I'$ is a Lie ideal of $L.$ We have $$[L,I'] = \Bigl[L_1 \oplus (\bigoplus\limits_{h \in \Sigma_G}L_h), \Bigl(\sum\limits_{g \in \Sigma_I^{-1},g^{-1} \in \Lambda_G}A_{g^{-1}}L_g\Bigr) \oplus \Bigl(\bigoplus\limits_{g \in \Sigma_I^{-1}}L_g\Bigr)\Bigr] \subset$$

$$\Bigl[L_1, \sum\limits_{g \in \Sigma_I^{-1},g^{-1} \in \Lambda_G}A_{g^{-1}}L_g\Bigr] + 
\Bigl(\bigoplus\limits_{g \in \Sigma_I^{-1}}L_g\Bigr)$$

{\small \begin{equation}\label{eq40}
+ \Bigl[\bigoplus\limits_{h \in \Sigma_G}L_h, \Bigl(\sum\limits_{g \in \Sigma_I^{-1},g^{-1} \in \Lambda_G}A_{g^{-1}}L_g\Bigr)\Bigr] + \Bigl[\bigoplus\limits_{h \in \Sigma_G}L_h,\Bigl(\bigoplus\limits_{g \in \Sigma_I^{-1}}L_g\Bigr)\Bigr].
\end{equation}}

For the first summand in \eqref{eq40}, if there exist $g_0 \in \Sigma_I^{-1}, g_0^{-1} \in \Lambda_G$ such that $[L_1, A_{g_0^{-1}}L_{g_0}] \neq 0$, by Equation \eqref{fundamental} we obtain
\begin{align*}
[L_1, A_{g_0^{-1}}L_{g_0}] &= A_{g_0^{-1}}[L_1,L_{g_0}] + \rho(L_1)(A_{g_0^{-1}})L_{g_0}\\
&\subset A_{g_0^{-1}}L_{g_0} \subset I'.
\end{align*}

Consider the third summand in \eqref{eq40}. Suppose for some $h\in \Sigma_G, g \in \Sigma_I^{-1}, g^{-1}\in \Lambda_G$ we get $[L_h,A_{g^{-1}}L_g] \neq 0$. Then in case $h = g$,  clearly $[L_g,A_{g^{-1}}L_g] \subset L_g \subset I',$ and that in case $h = g^{-1}$, since $I$ is an ideal and the maximal length give us $$L_{g^{-1}} = [L_{g^{-1}},A_{g^{-1}}L_g] \subset I \cap I' = 0,$$ a contradiction with $g^{-1} \in \Sigma_I$. Suppose $h \notin \{g,g^{-1}\}$, then by Equation \eqref{fundamental} and maximal length either $A_{g^{-1}}[L_h,L_g] = L_h \neq 0$ or $\rho(L_h)(A_{g^{-1}})L_g = L_h \neq 0$. In both cases, since $g^{-1} \in \Sigma_I$, we have by $G$-multiplicativity that $L_{h^{-1}} \subset I,$ that is, $h^{-1} \in \Sigma_I$. From here $h \in \Sigma_I^{-1}$ and then $L_h \subset I'$. Therefore $$\Bigl[\bigoplus\limits_{h \in \Sigma_G}L_h,\sum\limits_{g \in \Sigma_I^{-1}, g^{-1} \in \Lambda_G}A_{g^{-1}}L_g\Bigr] \subset I'.$$

\noindent Finally, if we consider the fourth summand in \eqref{eq40} and certain $h\in \Sigma_G, g \in \Sigma_I^{-1}$ satisfies $[L_h,L_g] \neq 0$, we have $[L_h, L_g] = L_{hg}$. Suppose $h \neq g^{-1}.$ Since $g^{-1} \in \Sigma_I$, the $G$-multiplicativity gives us $[L_{g^{-1}},L_{h^{-1}}] = L_{g^{-1}h^{-1}} \subset I$. We have $g^{-1}h^{-1} = (hg)^{-1} \in \Sigma_I$. Hence $hg \in \Sigma_I^{-1}$ and then $L_{hg} \subset I'$. Consider $h = g^{-1} \in \Sigma_I,$ in case $[L_{g^{-1}},L_g] \neq 0$ we have $[L_{g^{-1}},L_g] \subset I$. Then $L_g = \bigl[[L_{g^{-1}},L_g],L_g\bigr] \subset I.$ From here $g,g^{-1} \in \Sigma_I,$ a contradiction with \eqref{eq50}. Thus  $\bigl[\bigoplus_{h \in \Sigma_G}L_h,\bigoplus_{g \in \Sigma_I^{-1}}L_g\bigr] \subset I'$. We prove that  $I'$ is a (graded) Lie ideal of $L$.

Let us check that $AI' \subset I'. $ We have $$AI' = \Bigl(A_1 \oplus \bigoplus\limits_{k \in \Lambda_G}A_k\Bigr)\Bigl(\Bigl(\sum\limits_{g \in \Sigma_I^{-1}, g^{-1} \in \Lambda_G}A_{g^{-1}}L_g\Bigr) \oplus \Bigl(\bigoplus\limits_{g \in \Sigma_I^{-1}}L_g\Bigr)\Bigr) \subset $$
\begin{equation}\label{eq400}
I' + \bigl(\bigoplus\limits_{k \in \Lambda_G}A_k\bigr)\Bigl(\sum\limits_{g \in \Sigma_I^{-1}, g^{-1} \in \Lambda_G}A_{g^{-1}}L_g\Bigr) + \bigl(\bigoplus\limits_{k \in \Lambda_G}A_k\bigr)\Bigl(\bigoplus\limits_{g \in \Sigma_I^{-1}}L_g\Bigr).
\end{equation}

Consider the third summand in \eqref{eq400} and suppose that $A_kL_g \neq 0$ for certain $k \in \Lambda_G, g \in \Sigma_I^{-1}.$ In case $kg \in \Sigma_I$ we get $kg \in \Sigma_G$, and then $(kg)^{-1}=k^{-1}g^{-1} \in \Sigma_G$. By the $G$-multiplicativity of $(L,A)$ that $L_{k^{-1}g^{-1}} = A_{k^{-1}}L_{g^{-1}}\neq 0.$ Now by the maximal length of $L$ and the fact $g^{-1} \in \Sigma_I$, we get $A_{k^{-1}}L_{g^{-1}} = L_{k^{-1}g^{-1}} \subset I.$ Therefore $k^{-1}g^{-1} = (kg)^{-1} \in \Sigma_I,$ a contradiction. Hence $kg \in \Sigma_I^{-1}$.

We can argue as above with the second summand in \eqref{eq400} so as to conclude that $AI' \subset I'$.

Finally we show that $\rho(I')(A)L \subset I'.$ By Equation \eqref{fundamental} and since $L$ is tight we conclude $$\rho(I')(A)L \subset [I',AL]+A[I',L] \subset [I',L]+AI' \subset I',$$
so $I'$ is a (graded) ideal of the graded Lie-Rinehart algebra $(L,A)$.

Now since $[I',I] = 0$ it follows $\sum\limits_{g \in \Sigma_G}[L_g,L_{g^{-1}}] = 0,$ so by hypothesis must be $$L_1 = \Bigl(\sum\limits_{g \in \Sigma_I, g^{-1} \in \Lambda_G}A_{g^{-1}}L_g\Bigr) \oplus \Bigl(\sum\limits_{g \in \Sigma_I^{-1}, g^{-1} \in \Lambda_G}A_{g^{-1}}L_g\Bigr).$$ Indeed, the sum is direct because if there exists $$0 \neq x \in \Bigl(\sum\limits_{g \in \Sigma_I, g^{-1} \in \Lambda_G}A_{g^{-1}}L_g\Bigr) \cap \Bigl(\sum\limits_{g \in \Sigma_I^{-1}, g^{-1} \in \Lambda_G}A_{g^{-1}}L_g\Bigr),$$ taking into account ${\mathcal Z}(L)=\{0\}$ and $L$ is graded, there exists $0 \neq v_{g'} \in L_{g'}, g' \in \Sigma_G$, such that $[x,v_{g'}] \neq 0$, being then $L_{g'} \subset I \cap I' = 0$, a contradiction. Hence $x = 0$ and the sum is direct. Taking into account the above observation and Equation (\ref{eq50}) we have $$L = I \oplus I'.$$

\noindent  Finally, we can proceed with $I$ and $I'$ as we did for $L$ in the first case of the proof to conclude that $I$ and $I'$ are gr-simple ideals, which completes the proof of the theorem.
\end{proof}

\noindent In a similar way to Proposition \ref{teo-3} we can prove the next result.

\begin{proposition}\label{teo-33}
Let $(L,A)$ be a tight $G$-graded Lie-Rinehart algebra of maximal length, $G$-multiplicative and   all the elements of $\Lambda_G$ being $\Lambda_G$-connected. Then either $A$ is gr-simple or $A = J \oplus J'$ where $J,J'$ are gr-simple ideals of $A$.
\end{proposition}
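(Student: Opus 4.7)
The plan is to mimic the proof of Proposition~\ref{teo-3} step by step, replacing the bracket $[\cdot,\cdot]$ by the commutative product, $\Sigma_G$-connections by $\Lambda_G$-connections, the ideal hypothesis for $L$ by the ideal hypothesis for $A$, and exploiting the tightness formula $A_1 = \sum_{k \in \Lambda_G \cap \Sigma_G}\rho(L_{k^{-1}})(A_k) + \sum_{k \in \Lambda_G}A_{k^{-1}}A_k$ in place of the analogous formula for $L_1$. The hypothesis that all elements of $\Lambda_G$ are $\Lambda_G$-connected plays the same role here that the $\Sigma_G$-connectedness played there, and it lets us propagate non-vanishing through a connection using $G$-multiplicativity and maximal length.

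First I would establish an analogue of Lemma~\ref{lemma_I_zero}: any non-zero graded ideal $J$ of $A$ satisfies $J\not\subset A_1$. Assuming $J\subset A_1$, the degree argument gives $A_kJ\subset A_k\cap J=0$ for every $k\in\Lambda_G$; associativity then kills $A_{k^{-1}}A_k\cdot J=A_{k^{-1}}(A_kJ)=0$, while the Leibniz rule for the derivation $\rho(L_{k^{-1}})$ applied to $A_kJ=0$ together with $A_kJ=0$ eliminates the $\rho(L_{k^{-1}})(A_k)\cdot J$ contribution. Combined with $AJ=A_1J+\sum_k A_kJ$, this forces $AJ=0$, so $J\subset\mathcal{A}\mathrm{nn}(A)=0$, a contradiction. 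Hence $\Lambda_J:=\{k\in\Lambda_G: A_k\cap J\neq 0\}$ is non-empty, and by maximal length $A_k\subset J$ for every $k\in\Lambda_J$.

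Next I would split into the two cases exactly as in Proposition~\ref{teo-3}. In Case~(a), when there exists $k\in\Lambda_J$ with $k^{-1}\in\Lambda_J$, I fix any $h\in\Lambda_G$ and choose a $\Lambda_G$-connection $\{k_1,\dots,k_n\}\subset\Lambda_G^\pm\cup\Sigma_G^\pm$ from $k$ to $h$. Walking along the connection and using $G$-multiplicativity together with maximal length, at each step the product $A_{k_1\cdots k_{i-1}}\cdot A_{k_i}$ (if $k_i\in\Lambda_G$) or $\rho(L_{k_i})(A_{k_1\cdots k_{i-1}})$ (if $k_i\in\Sigma_G$) lands in the one-dimensional space $A_{k_1\cdots k_i}$ and stays inside $J$; the symmetric argument starting from $k^{-1}\in\Lambda_J$ yields the companion element. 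Thus $\Lambda_J=\Lambda_G$, and the tightness formula puts $A_1\subset J$, so $A=J$ and $A$ is gr-simple. In Case~(b), the same propagation forces $\Lambda_G=\Lambda_J\ \dot\cup\ \Lambda_J^{-1}$; I define
$$J' := \Bigl(\sum_{k\in\Lambda_J^{-1}\cap\Sigma_G}\rho(L_{k^{-1}})(A_k)\Bigr)\oplus\Bigl(\bigoplus_{k\in\Lambda_J^{-1}}A_k\Bigr)$$
and verify that $J'$ is a graded ideal of $A$, by a case analysis showing that $A_jA_k\subset J'$ (and similarly for the $A_1$-summand) whenever $k\in\Lambda_J^{-1}$, using $G$-multiplicativity and maximal length to argue that any non-zero product stays in $\Lambda_J^{-1}$ (otherwise some $A_m$ would sit in both $J$ and $J'$). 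Since the tightness formula gives $A_1\subset J+J'$, one obtains $A=J+J'$; directness of the sum follows from $\mathcal{A}\mathrm{nn}(A)=0$, by taking $0\neq x\in J\cap J'$, using that $J\cdot J'\subset J\cap J'$ has only a degree-$1$ piece, and producing via $G$-multiplicativity a non-zero $A_m$ lying in both $J$ and $J'$, which would force $m\in\Lambda_J\cap\Lambda_J^{-1}=\emptyset$. Finally, the two-case analysis applied in turn to each of $J$ and $J'$ shows them to be gr-simple.

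The main obstacle is the analogue of Lemma~\ref{lemma_I_zero}: unlike a Lie ideal, a graded ideal of $A$ is not \emph{a priori} stable under $\rho(L)$, so one cannot directly use the derivation-equation trick from the Lie proof. The argument instead requires combining the Leibniz rule for $\rho(L_{k^{-1}})$ with the identity $A_kJ=0$ to annihilate the awkward summand $\rho(L_{k^{-1}})(A_k)\cdot J$ of $A_1J$. A secondary, but more bookkeeping-heavy, difficulty is verifying the ideal property of $J'$ in Case~(b), since one must handle both the $\rho(L_{k^{-1}})(A_k)$-summand in degree $1$ and the interactions $A_j\cdot\rho(L_{k^{-1}})(A_k)$, each by a $\Lambda_G$-connection argument to force the resulting degree back into $\Lambda_J^{-1}$ rather than $\Lambda_J$.
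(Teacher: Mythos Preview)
Your overall plan---mirror Proposition~\ref{teo-3} with $A$ in place of $L$---is exactly what the paper intends (it simply says ``in a similar way''), and the skeleton you outline is the right one. However, both places where you invoke $\rho(L_\bullet)$ acting on pieces of $J$ contain a genuine gap, and it is the same gap in two guises: a graded ideal $J$ of $A$ is \emph{not} stable under $\rho(L)$, so any step that applies a derivation to an element of $J$ loses control of the ideal.

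Concretely, in your analogue of Lemma~\ref{lemma_I_zero} the Leibniz identity applied to $A_kJ=0$ only yields
\[
\rho(L_{k^{-1}})(A_k)\,J \;=\; -\,A_k\,\rho(L_{k^{-1}})(J)\;\subset\;A_kA_{k^{-1}},
\]
and there is no reason for the right-hand side to vanish: $\rho(L_{k^{-1}})(J)$ is merely a subspace of $A_{k^{-1}}$, not of $J$, so the identity $A_kJ=0$ says nothing about it. Your claim that ``Leibniz together with $A_kJ=0$ eliminates this contribution'' is therefore unjustified as written. The same problem recurs in your Case~(a) propagation: when you walk along a $\Lambda_G$-connection and the step $k_i$ lies in $\Sigma_G$, you propose to pass from $A_{k_1\cdots k_{i-1}}\subset J$ to $\rho(L_{k_i})(A_{k_1\cdots k_{i-1}})$, asserting this ``stays inside $J$''. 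It need not---$\rho(L_{k_i})$ does not preserve $J$---and moreover $G$-multiplicativity gives no non-vanishing condition on $\rho(L_g)(A_h)$, so you cannot even guarantee the image is non-zero. A further issue is that Definition~\ref{connection2} allows the partial products $k_1\cdots k_i$ to lie in $\Sigma_G^\pm\setminus\Lambda_G^\pm$, in which case $A_{k_1\cdots k_i}=0$ and the propagation through $A$ collapses. These difficulties do not arise in Proposition~\ref{teo-3} because an ideal of the Lie--Rinehart algebra $L$ is simultaneously a Lie ideal and an $A$-submodule, giving two independent stability conditions; an ideal of $A$ carries only the multiplicative one. To close the argument you must either show (using maximal length and $G$-multiplicativity) that the offending terms vanish after all, or find a propagation scheme that uses only multiplication by $A_{k_i}$ and avoids $\rho$.
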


\noindent Finally, we can prove the following theorem.

\begin{theorem}\label{teo100}
Let $(L,A)$ be a tight $G$-graded Lie-Rinehart algebra of maximal length, $G$-multiplicative, with symmetrical $\Sigma_G, \Lambda_G$ in such a way that $\Sigma_G, \Lambda_G$ have all their elements $\Sigma_G$-connected and $\Lambda_G$-connected, respectively. Then $$\hbox{$L =\bigoplus\limits_{i\in I}I_i$ \hspace{0.4cm} and \hspace{0.4cm} $A = \bigoplus\limits_{j \in J}A_j$}$$ where any $I_i$ is a gr-simple ideal of $L$ having all of its subindices $\Sigma_G$-connected and such that $[I_i,I_h]=0$ for any $h \in I$ with $i \neq h$; and any $A_j$ is a gr-simple ideal of $A$ satisfying $A_jA_k=0$ for any $k \in J$ such that $j \neq k.$ Furthermore, for any $r \in I$ there exists a unique $\tilde{r} \in J$ such that $$A_{\tilde{r}}I_r \neq 0.$$ We also have that any $I_r$ is a $G$-graded Lie-Rinehart algebra over $A_{\tilde{r}}$.
\end{theorem}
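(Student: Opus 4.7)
My strategy is to combine Propositions \ref{teo-3} and \ref{teo-33} with an auxiliary direct-sum decomposition of $L$ driven by the gr-simple components of $A$. Proposition \ref{teo-3} yields $L = \bigoplus_{i \in I} I_i$ with $|I| \in \{1,2\}$, each $I_i$ a gr-simple graded ideal, together with the orthogonality $[I_i, I_h] = 0$ for $i \neq h$ (explicit in the proof of Proposition \ref{teo-3} in the case $|I|=2$); the subindex condition on each $I_i$ is automatic since by hypothesis all of $\Sigma_G$ is $\Sigma_G$-connected. Symmetrically, Proposition \ref{teo-33} gives $A = \bigoplus_{j \in J} A_j$ with pairwise-orthogonal gr-simple graded ideals.

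The crucial additional input is a derivation-invariance lemma: every $D \in \mbox{Der}(A)$ preserves each $A_j$. Indeed, tightness ($AA = A$) together with $A_jA_k = 0$ forces $A_j = A_jA_j$; for $a = \sum a'_\alpha b'_\alpha \in A_j$, Leibniz gives $D(a) = \sum (D(a'_\alpha)b'_\alpha + a'_\alpha D(b'_\alpha))$, and the $A_k$-components ($k \neq j$) of $D(a'_\alpha)$ and $D(b'_\alpha)$ are killed against $b'_\alpha, a'_\alpha \in A_j$, leaving $D(a) \in A_j$. Using this I would then show $L = \bigoplus_{j \in J} A_jL$ as a direct sum of pairwise-commuting graded $L$-ideals: each $A_jL$ is an $L$-ideal via \eqref{fundamental} combined with the invariance lemma; $[A_jL, A_kL] = 0$ for $j \neq k$ follows from the identity $[av,bw] = ab[v,w] + a\rho(v)(b)w - b\rho(w)(a)v$ (a consequence of \eqref{fundamental}) together with $A_jA_k=0$ and the invariance; and any element of $A_{j_1}L \cap A_{j_2}L$ for $j_1 \neq j_2$ is annihilated by every $A_j$ and hence by $A$, so it lies in ${\mathcal Ann}_L(A)=0$. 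Since $AL = L$, these ideals span $L$. Re-running the argument with $I_r$ in place of $L$ (using $AI_r = I_r$, which follows from $AL=L$ and the decomposition $L = \bigoplus I_i$) gives the refinement $I_r = \bigoplus_{j \in J}A_jI_r$, where each $A_jI_r = I_r \cap A_jL$ is a graded $L$-ideal contained in $I_r$, hence a graded ideal of $I_r$.

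By gr-simplicity each $A_jI_r$ belongs to $\{0, I_r, \ker\rho|_{I_r}\}$, and directness of the sum forbids two such summands from being simultaneously nonzero (any pair from $\{I_r, \ker\rho|_{I_r}\}$ has nontrivial intersection), while $I_r \neq 0$ forbids all summands from vanishing. Thus exactly one $A_{\tilde r}I_r$ is nonzero, providing both the existence and uniqueness of $\tilde r \in J$ with $A_{\tilde r}I_r \neq 0$. Finally, $I_r$ is a graded Lie-Rinehart algebra over $A_{\tilde r}$: the module structure comes from $A_{\tilde r}I_r \subset I_r$, the invariance lemma makes $\rho|_{I_r}$ land in $\mbox{Der}(A_{\tilde r})$, and \eqref{EQ1}, \eqref{EQ2}, \eqref{fundamental} restrict without issue. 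The main obstacle I anticipate is establishing the derivation-invariance of each $A_j$ and the consequent direct-sum factorization $L = \bigoplus_j A_jL$; once these are in hand, gr-simplicity and directness deliver the pairing and uniqueness mechanically.
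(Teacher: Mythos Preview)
Your proof is correct and, for the decomposition of $L$ and $A$ into gr-simple pieces, proceeds exactly as the paper does: since by hypothesis $\Sigma_G$ and $\Lambda_G$ each form a single connection class, Propositions~\ref{teo-3} and~\ref{teo-33} apply directly to $(L,A)$ and yield at most two gr-simple summands on each side. Where your argument genuinely diverges is in establishing the pairing $r\mapsto\tilde r$. The paper invokes Theorem~\ref{lema_final}, whose pairing statement was proved (Proposition~4.2) via $\Lambda_G$-connections between equivalence classes; under the present hypotheses those classes are singletons, so the passage from the coarse pairing of Theorem~\ref{lema_final} to the refined pairing among the gr-simple pieces $V,V'$ and $W,W'$ is left rather implicit. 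You instead supply a self-contained module-theoretic argument: from $AA=A$ and $A_jA_k=0$ you extract $A_j=A_jA_j$, deduce that every derivation of $A$ stabilises each $A_j$, and then use $AL=L$ together with ${\mathcal Ann}_L(A)=0$ to obtain the internal direct sum $I_r=\bigoplus_j A_jI_r$ by graded $L$-ideals, after which gr-simplicity of $I_r$ forces exactly one summand to survive. This route bypasses the connection combinatorics entirely and makes the uniqueness of $\tilde r$ transparent; it also yields the Lie--Rinehart structure of $I_r$ over $A_{\tilde r}$ cleanly, since your invariance lemma is precisely what is needed to restrict the anchor. The paper's approach, by contrast, keeps the argument within the connection framework developed in Sections~2--4, at the cost of being less explicit about why the pairing descends to the gr-simple refinement.
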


\begin{proof}
Taking into account Theorem \ref{lema_final} we can write $$L = \bigoplus\limits_{[g] \in \Sigma_G/\sim}I_{[g]}$$ as the direct sum of the family of ideals $I_{[g]}$, being each $I_{[g]}$ a $G$-graded Lie-Rinehart algebra having $[g]$ as $G$-support. Also we can write $A$ as the direct sum of the ideals $$A = \bigoplus\limits_{[k] \in \Lambda_G / \approx }\mathcal{A}_{[k]}$$ in such a way that any $\mathcal{A}_{[k]}$ has $[k]$ as $G$-support, and that for any $I_{[g]}$ there exists a unique $\mathcal{A}_{[k]}$ satisfying $\mathcal{A}_{[k]} I_{[g]} \neq 0,$ and being $(I_{[g]}, \mathcal{A}_{[k]})$ a $G$-graded Lie-Rinehart algebra.

In order to apply Propositions \ref{teo-3} and \ref{teo-33} to each $(I_{[g]}, \mathcal{A}_{[k]})$, we previously have to observe that the $G$-multiplicativity of $(L,A)$, Proposition \ref{pro-9} and Theorem \ref{teo2} show that $[g]$ and $[k]$ have, respectively, all of their elements connected through $\Sigma_G$-connections and $\Lambda_G$-connections contained in $[g]$ and $[k]$, respectively. Any of the $(I_{[g]},\mathcal{A}_{[k]})$ is $G$-multiplicative as consequence of the $G$-multiplicativity of $(L,A)$. Clearly $(I_{[g]},\mathcal{A}_{[k]})$ is of maximal length and tight, last fact consequence of tightness of $(L,A)$, Propositions \ref{teo-3} and \ref{teo-33}. So we can apply Propositions \ref{teo-3} and \ref{teo-33} to each $(I_{[g]}, \mathcal{A}_{[k]})$ so as to conclude that any $I_{[g]}$ is either gr-simple or the direct sum of gr-simple ideals $I_{[g]} = V \oplus V'$; and that any $\mathcal{A}_{[k]}$ is either gr-simple or the direct sum of gr-simple ideals $\mathcal{A}_{[k]} = W \oplus W'$. From here, it is clear that by writing $I_i = V \oplus V'$ and $\mathcal{A}_j = W \oplus W'$ if $I_i$ or $\mathcal{A}_j$ are not, respectively, gr-simple, then Theorem \ref{lema_final} allows as to assert that the resulting decomposition satisfies the assertions of the theorem.
\end{proof}

\end{document}